\date{April 12, 2009}
\theoremstyle{plain}
\newtheorem{theorem}{Theorem}[section]
\newtheorem{corollary}[theorem]{Corollary}
\newtheorem{lemma}[theorem]{Lemma}
\newtheorem{proposition}[theorem]{Proposition}
\theoremstyle{definition}
\newtheorem{definition}[theorem]{Definition}
\newtheorem{remark}[theorem]{Remark}
\numberwithin{equation}{section}
\newcommand{\s}{\sigma}
\newcommand{\ba}{\mathbf{a}}
\newcommand{\bb}{\mathbf{b}}
\newcommand{\bp}{\mathbf{p}}
\newcommand{\bn}{\mathbf{n}}
\newcommand{\br}{\mathbf{r}}
\newcommand{\bs}{\mathbf{s}}
\newcommand{\bd}{\mathbf{d}}
\newcommand{\bT}{\bar{\bar{T}}}
\newcommand{\bbT}{\bar{\bar{T}}}
\newcommand{\bbr}{\varsigma}
\newcommand{\cD}{\mathcal{D}}
\newcommand{\cM}{\mathcal{M}}
\newcommand{\cN}{\mathcal{N}}
\newcommand{\cO}{\mathcal{O}}
\newcommand{\cS}{\mathcal{S}}
\newcommand{\cU}{\mathcal{U}}
\renewcommand{\AA}{\mathbb{A}}
\newcommand{\QQ}{\mathbb{Q}}
\newcommand{\RR}{\mathbb{R}}
\newcommand{\CC}{\mathbb{C}}
\newcommand{\HH}{\mathbb{H}}
\newcommand{\TT}{\mathbb{T}}
\newcommand{\ZZ}{\mathbb{Z}}
\newcommand{\inc}{\hookrightarrow}
\newcommand{\lto}{\longrightarrow}
\newcommand{\surj}{\twoheadrightarrow}
\newcommand{\x}{\times}
\newcommand{\ox}{\otimes}
\newcommand{\la}{\langle}
\newcommand{\ra}{\rangle}
\newcommand{\frM}{{\frak M}}
\newcommand{\frm}{{\frak m}}
\newcommand{\frh}{{\frak h}}
\newcommand{\frs}{{\frak s}}
\newcommand{\frp}{{\frak p}}
\newcommand{\hs}{{\frh\frs}}
\newcommand{\phs}{{\frp\frh\frs}}
\newcommand{\mhs}{{\frm\frh\frs}}
\newcommand{\Sym}{\mathrm{Sym}}
\newcommand{\Jac}{\mathrm{Jac}}
\DeclareMathOperator{\rk}{rk}\DeclareMathOperator{\rg}{rg}
 \DeclareMathOperator{\im}{im}
\DeclareMathOperator{\Hom}{Hom} \DeclareMathOperator{\Ext}{Ext}
 \DeclareMathOperator{\Gr}{Gr}
\DeclareMathOperator{\GL}{GL}\DeclareMathOperator{\Aut}{Aut}
\newcommand{\scp}{{\s_c^+}}
\newcommand{\scm}{{\s_c^-}}
\title{Hodge structures of the moduli spaces of pairs}
\subjclass[2000]{Primary: 14D20. Secondary: 14H60, 14F45.}
\keywords{Moduli space, complex curve, holomorphic bundle, Hodge
structure.}
\author{Vicente Mu\~noz}
  \address{Instituto de Ciencias Matem{\'a}ticas CSIC-UAM-UC3M-UCM \\
  Consejo Superior de Investigaciones Cient{\'\i}ficas \\ Serrano 113 bis
  \\ 28006 Madrid \\ Spain}
  \address{Facultad de Matem\'{a}ticas \\ Universidad Complutense
  de Madrid \\ Plaza Ciencias 3
  \\ 28040 Madrid \\ Spain}
  \email{vicente.munoz@imaff.cfmac.csic.es}
\thanks{Partially supported through grant MEC
(Spain) MTM2007-63582}
\begin{document}
\maketitle

\begin{abstract}
 Let $X$ be a smooth projective curve of genus $g\geq 2$ over
 $\CC$. Fix $n\geq 2$, $d\in \ZZ$.
 A pair $(E,\phi)$ over $X$ consists of an algebraic vector bundle
 $E$ of rank $n$ and degree $d$ over $X$ and a section $\phi \in H^0(E)$.
 There is a concept of stability for pairs which depends on a
 real parameter $\tau$. Let $\frM_\tau(n,d)$ be the moduli space
 of $\tau$-semistable pairs of rank $n$ and degree $d$ over $X$.
 Here we prove that the cohomology
 groups of $\frM_\tau(n,d)$ %of the moduli spaces of $\tau$-stable pairs
 are Hodge
 structures isomorphic to direct summands of tensor products of the
 Hodge structure $H^1(X)$. This implies a similar result for the
 moduli spaces of stable vector bundles over $X$.
\end{abstract}

%%%%%%%%%%%%%%%%%%%%%%%%%%%%%%%%%%%%%%%%%%%%%%%%%%%%%%%%
\section{Introduction} \label{sec:introduction}
%%%%%%%%%%%%%%%%%%%%%%%%%%%%%%%%%%%%%%%%%%%%%%%%%%%%%%%%

Let $X$ be a smooth projective curve of genus $g\geq 2$ over the
field of complex numbers. Fix $n \geq 2$ and $d\in\ZZ$. We shall
denote by $M(n,d)$ the moduli space of ($S$-equivalence classes of)
semistable bundles of rank
$n$ and degree $d$ over $X$. The open subset consisting of stable
bundles will be denoted $M^s(n,d) \subset M(n,d)$. Note that
$M(n,d)$ is a projective variety, which is in general not smooth if
$n$ and $d$ are not coprime. On the other hand, $M^s(n,d)$ is a
smooth quasi-projective variety. If $L_0$ is a fixed line bundle of
degree $d$, then we have the moduli spaces $M^s(n,L_0)$ and $M(n,L_0)$
consisting of stable and polystable bundles $E$, respectively, with
determinant $\det(E)\cong L_0$.

A pair $(E,\phi)$ over $X$ consists of a bundle $E$ of rank $n$ and
degree $d$ over $X$ together with a section $\phi\in H^0(E)$. There
is a concept of stability for a pair which depends on the choice of
a parameter $\tau \in \RR$. This gives a collection of moduli spaces
of $\tau$-polystable pairs $\frM_\tau(n,d)$, which are projective
varieties. It contains a smooth open subset $\frM_\tau^s(n,d)\subset
\frM_\tau(n,d)$ consisting of $\tau$-stable pairs. If we fix the
determinant $\det(E)\cong L_0$, then we have the moduli spaces of
pairs with fixed determinant, $\frM_\tau^s(n,L_0)$ and
$\frM_\tau(n,L_0)$. Pairs are discussed at length in
\cite{B,BD,GP,MOV1}.

The range of the parameter $\tau$ is an open interval $I$ split by a
finite number of critical values $\tau_c$. For a non-critical value
$\tau\in I$, there are no properly semistable pairs, so $\frM_\tau
(n,d)=\frM_\tau^s(n,d)$ is smooth and projective. For a critical
value $\tau=\tau_c$, $\frM_\tau(n,d)$ is in general singular at
properly $\tau$-semistable points. Our first main result is the following.

\begin{theorem} \label{thm:main-HS}
  Let $\tau \in I$ be non-critical. Then the pure Hodge structures
  $H^i(\frM_\tau(n,d))$ are isomorphic to direct summands of
  some tensor products of $H^1(X)$.
  A similar result holds for $H^i(\frM_\tau(n,L_0))$.
\end{theorem}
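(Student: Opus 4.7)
My plan is induction on the rank $n$, combined with variation of the stability parameter $\tau$ and wall-crossing. The interval $I$ is partitioned into open chambers by the critical values, and $\frM_\tau(n,d)$ is constant on each chamber; it therefore suffices to (i) describe $\frM_\tau$ at one extreme of $I$, and (ii) control how the Hodge structure changes as $\tau$ crosses a critical value $\tau_c$.

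For the base of the induction, the case $n=1$ reduces to the symmetric products $X^{(d)}$ together with the Jacobian fibration, both of whose Hodge structures are well known to be direct summands of tensor powers of $H^1(X)$. For rank $n\geq 2$, at one end of the interval $I$ the moduli space $\frM_\tau(n,d)$ admits an explicit description as a projective bundle (or a relative Grothendieck scheme of quotients) over a moduli space of lower rank; the fibres have Tate cohomology, so the class of Hodge structures we target is preserved.

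For the wall-crossing step at $\tau=\tau_c$, strictly $\tau_c$-semistable pairs arise as non-split extensions
$$0 \too (E_1,\phi_1) \too (E,\phi) \too (E_2,\phi_2) \too 0,$$
with $\rk E_1 = n_1 < n$, $\rk E_2 = n_2 < n$, and with exactly one of the $\phi_i$ vanishing, so that one summand is a $\tau_c$-stable pair of lower rank and the other is a stable bundle. The two flip loci $\cS_{\tau_c^\pm} \subset \frM_{\tau_c^\pm}(n,d)$ are projective bundles, with fibres coming from appropriate Ext groups, over products of the form $\frM_{\tau_c}^s(n_1,d_1) \x M^s(n_2,d_2)$ (with the roles of sub and quotient exchanged between the two sides of the wall). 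The birational transformation $\frM_{\tau_c^+}(n,d) \dashrightarrow \frM_{\tau_c^-}(n,d)$ should be a smooth blow-up along $\cS_{\tau_c^+}$ followed by a smooth blow-down to $\cS_{\tau_c^-}$, yielding an isomorphism of pure Hodge structures
$$H^i(\frM_{\tau_c^+}) \oplus \bigoplus_{k\geq 1} H^{i-2k}(\cS_{\tau_c^+}) \iso H^i(\frM_{\tau_c^-}) \oplus \bigoplus_{k\geq 1} H^{i-2k}(\cS_{\tau_c^-}),$$
with $k$ ranging up to the codimensions of the flip loci. By induction on $n$, both $\cS_{\tau_c^\pm}$ lie in the required class (as projective bundles over products of lower-rank moduli), so this isomorphism transfers the property between adjacent chambers.

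The main obstacle will be verifying that the flip is indeed a smooth blow-up/blow-down of the claimed form: this requires an Ext-group computation identifying the normal bundle of each $\cS_{\tau_c^\pm}$ inside $\frM_{\tau_c^\pm}(n,d)$, together with a careful deformation-theoretic argument handling the transition. The fixed-determinant case $\frM_\tau(n,L_0)$ should follow from the total case via the action of $\Jac X$ on $\frM_\tau(n,d)$ by tensoring with degree-$0$ line bundles: after passing to an étale cover trivializing the $n$-torsion and taking invariants under the resulting finite abelian group, one obtains a Künneth-type splitting of Hodge structures relating $H^*(\frM_\tau(n,d))$ with $H^*(\frM_\tau(n,L_0)) \ox H^*(\Jac X)$, from which the fixed-determinant claim is extracted.
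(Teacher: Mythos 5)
Your overall strategy (induction on rank, analysis at one end of the parameter interval, wall-crossing at critical values) matches the paper's, but the key step — your description of the flip loci and of the wall-crossing — contains a genuine gap. You assume that a strictly $\tau_c$-semistable pair has a Jordan--H\"older filtration with exactly two factors, one a $\tau_c$-stable pair of lower rank and one a stable bundle, so that $\cS_{\tau_c^\pm}$ are projective bundles over $\frM_{\tau_c}^s(n_1,d_1)\x M^s(n_2,d_2)$ and the flip is a smooth blow-up/blow-down. For $n\geq 3$ this is false: the filtration can have arbitrary length $r$, the bundle-type graded pieces are only \emph{polystable} (direct sums $S_1^{a_{1j}}\oplus\cdots\oplus S_b^{a_{bj}}$ of distinct stable bundles with multiplicities), and consequently the flip loci are not smooth subvarieties but unions of locally closed strata $X^\pm(\bn)$, each realized as a tower of fibrations whose fibers are open subsets of $\Ext^1$-spaces cut out by linear-independence conditions, quotiented by groups $\GL(a_{1j})\times\cdots\times\GL(a_{bj})$ and a finite permutation group. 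The blow-up/blow-down isomorphism of pure Hodge structures you write down therefore does not exist in general, and the "main obstacle" you flag is not a technical verification but the point where the argument breaks.

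The paper circumvents this by never comparing pure Hodge structures across the wall directly. It introduces the class of $R_X$-generated \emph{mixed} Hodge structures, proves an additivity lemma for compactly supported cohomology under decompositions $Z=Y\sqcup W$ (using exactness of $\Gr^W_r$), shows each stratum of the flip locus is a tower of Zariski-locally trivial fibrations with affinely stratified fibers (so the strata contribute only Tate twists of lower-rank moduli cohomology), and then runs the induction on $\s$ from $\s_M^-$ downward using only set-theoretic decompositions such as $\cN_{\s_c^-}=\cN_{\s_c}^s\sqcup\cS_{\s_c^-}$. Purity and the "direct summand" conclusion are recovered only at the end, for non-critical $\tau$, from smoothness and projectivity of $\frM_\tau(n,d)$ together with semisimplicity of polarized Hodge structures. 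Your treatment of the fixed-determinant case via the $\Jac X$-action and an \'etale cover is also a departure from the paper, which instead reruns the whole argument for fixed-determinant triples; your route is plausible but would need the torsion-group action on cohomology worked out, and in any case it does not repair the wall-crossing gap.
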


We recover from this a well-known result of Atiyah \cite{At}.

\begin{corollary} \label{cor:main-HS-bundles}
 Suppose that $n$ and $d$ are coprime. Then the
 pure Hodge structures $H^i(M(n,d))$ and $H^i(M(n,L_0))$
 are isomorphic to direct summands of some tensor products
 of $H^1(X)$.
\end{corollary}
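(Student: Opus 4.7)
My plan is to realize $M(n,d)$ as the base of a Zariski-locally trivial projective bundle whose total space is $\frM_\tau(n,d)$ for a suitably chosen $\tau$, and then pull back the conclusion of Theorem \ref{thm:main-HS} via the projective bundle formula.

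First I would use the isomorphism $M(n,d)\iso M(n,d+ne)$ coming from tensoring with a fixed line bundle of degree $e$ to reduce to the case where $d$ is so large that $H^1(X,E)=0$ for every stable bundle $E$ of rank $n$ and degree $d$; by Riemann--Roch, $h^0(X,E)=d-n(g-1)=:N$ is then constant on $M(n,d)$. Next I would choose $\tau$ past the last critical value in $I$. In this uppermost chamber, standard results on pairs (\cite{B,BD,GP,MOV1}) show that $(E,\phi)$ is $\tau$-stable iff $E$ is a stable bundle and $\phi\ne 0$. Writing $p\colon X\x M(n,d)\to M(n,d)$ for the projection and $\cE$ for a universal bundle on $X\x M(n,d)$ (which exists because $\GCD(n,d)=1$), the forgetful map $\frM_\tau(n,d)\to M(n,d)$, $[(E,\phi)]\mapsto[E]$, identifies $\frM_\tau(n,d)$ with $\PP(p_*\cE)$, a Zariski-locally trivial $\PP^{N-1}$-bundle.

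The projective bundle formula, which is compatible with Hodge structures because it is induced by an algebraic projective bundle, then gives
$$H^i(\frM_\tau(n,d))\iso \bigoplus_{j=0}^{N-1} H^{i-2j}(M(n,d))(-j),$$
so $H^i(M(n,d))$ appears as a direct summand (the $j=0$ piece) of $H^i(\frM_\tau(n,d))$. By Theorem \ref{thm:main-HS} the latter is a direct summand of a tensor product of copies of $H^1(X)$, hence so is $H^i(M(n,d))$. The Tate twists $\QQ(-j)$ appearing in the displayed formula are themselves direct summands of $H^1(X)^{\ox 2j}$ via the cup-product surjection $H^1(X)\ox H^1(X)\surj H^2(X)=\QQ(-1)$, so no additional ingredient is needed. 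The identical argument, now applied to the projective bundle $\frM_\tau(n,L_0)\to M(n,L_0)$, handles the fixed-determinant case.

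The main obstacle is the projective-bundle identification in the second step: it relies both on the coprimality of $n$ and $d$ (to guarantee smoothness of $M(n,d)$ and the existence of a universal bundle $\cE$) and on the standard description of $\tau$-stability in the uppermost chamber. Once these inputs are in hand, the rest is routine cohomological bookkeeping.
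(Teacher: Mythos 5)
Your overall strategy is the paper's: realize $M(n,d)$ as the base of a projective fibration whose total space is a moduli space of pairs in an extreme chamber, and transfer the conclusion of Theorem \ref{thm:main-HS} to the base via the projective bundle formula. The paper does exactly this in the last step of the proof of Proposition \ref{prop:W-gen-2.2}, using the projective fibration $\cU_m \to M^s(n,d)\times \Jac^{d_o}X$ of Remarks \ref{rem:sm+} and \ref{rem:sc=sm} (citing \cite[Proposition 4.10]{MOV1}), where $\cU_m$ is the open stratum of $\cN_{\s_m^+}$ consisting of triples whose underlying rank-$n$ bundle is stable --- which, for coprime $n$ and $d$, is all of $\cN_{\s_m^+}$.

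There is, however, a concrete error in your second step: you place $\tau$ in the \emph{uppermost} chamber, past the last critical value, and assert that there $(E,\phi)$ is $\tau$-stable iff $E$ is stable and $\phi\neq 0$. That is the description of the \emph{lowest} chamber, $\s=\s_m^+$, i.e.\ $\tau=d/n+\epsilon$. In the uppermost chamber the generic $\tau$-stable pair is an extension $0\to\cO\to E\to F\to 0$ with $F$ a stable bundle of rank $n-1$ (see Remark \ref{rem:sM-}); the underlying bundle $E$ need not be stable, $\frM_\tau(n,d)$ fibers over data built from $M^s(n-1,\cdot)$ rather than over $M(n,d)$, and the forgetful map you want is not a projective bundle there --- it is not even everywhere defined as a map to $M(n,d)$. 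The point is that the condition $\mu(E')<\tau$ for all subbundles $E'$ forces $E$ to be semistable only when $\tau$ is just above $\mu(E)=d/n$, which is why the identification $\frM_\tau(n,d)\cong\PP(p_*\cE)$ holds precisely at the bottom of the parameter range, together with your largeness assumption on $d$ guaranteeing that $h^0(E)$ is constant (the paper's condition $d/n-d_o>2g-2$ with $d_o=0$). Once $\tau$ is moved to the chamber adjacent to $\s_m$, the remainder of your argument --- coprimality giving $M(n,d)=M^s(n,d)$ smooth with a universal bundle, the projective bundle formula as an isomorphism of Hodge structures, and absorbing the Tate twists into tensor powers of $H^1(X)$ via the cup-product surjection --- is correct and coincides with the paper's route.
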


Our strategy of proof is the following. First, we find more
convenient to rephrase the problem in terms of triples. A triple
$(E_1,E_2,\phi)$ consists of a pair of bundles $E_1$, $E_2$ of ranks
$n_1,n_2$ and degrees $d_1,d_2$, respectively, and a homomorphism $\phi:E_2\to E_1$.
There is a suitable concept of stability for triples depending on
a real parameter $\sigma$. This gives rise to moduli spaces
$\cN_\sigma=\cN_\sigma (n_1,n_2,d_1,d_2)$ of $\s$-polystable
triples.

There is an identification of moduli spaces of pairs and triples
given by $\frM_\tau (n,d)\to \cN_\s(n,1,d,\cO)$, $(E,\phi)\mapsto
(E,\cO,\phi)$, where $\cO$ is the trivial line bundle, and $\s=
(n+1) \tau - d$. Actually, this rephrasing is a matter of
aesthetic. The arguments are carried out with the moduli spaces of
triples so that they could eventually be generalised to the case
of triples of arbitrary ranks $n_1,n_2$.

The range of the parameter $\s$ is an interval $I =(\s_m,\s_M)
\subset \RR$ split by a finite number of critical values $\s_c$.
When $\s$ moves without crossing a critical value, then
$\cN_{\s}=\cN_\s (n,1,d_1,d_2)$ remains unchanged, but when $\s$,
crosses a critical value, $\cN_{\s}$ undergoes a birational
transformation which we call a \emph{flip}. This consists on
removing some subvariety and inserting a different one. We give a
stratification of the flip locus, and describe explicitly the
strata. This allows us to prove Theorem \ref{thm:main-HS}.

For $\s=\s_m^+=\s_m+\epsilon$, $\epsilon>0$ small, we have a
morphism $\cN_{\sigma} \to M(n,d)$. When $d$ is large enough,
this is a fibration over the locus $M^s(n,d)$. This allows us to
deduce Corollary \ref{cor:main-HS-bundles} from Theorem
\ref{thm:main-HS}.

\medskip

For proving the main result, we introduce the notion of a
mixed Hodge structure to be $R_X$-generated when the graded pieces
are pure Hodge structures which are isomorphic to direct summands of
tensor products of the Hodge structure $H^1(X)$. We actually obtain the following.

\begin{theorem} \label{thm:main-MHS}
  Let $\tau \in I$ (critical or not). Then the mixed Hodge structures
  $H^i(\frM^s_\tau(n,d))$ and $H^i(\frM_\tau^s(n,L_0))$ are $R_X$-generated.

 Let $n\geq 2$, $d\in\ZZ$ (coprime or not). Then the
 mixed Hodge structures $H^i(M^s(n,d))$ and $H^i(M^s(n,L_0))$
 are $R_X$-generated.
\end{theorem}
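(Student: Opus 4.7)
The plan is to prove both parts jointly by induction on the rank $n$, using Theorem~\ref{thm:main-HS} to handle every non-critical parameter along the way.

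For the first claim, the non-critical case is immediate: then $\frM_\tau^s(n,d)=\frM_\tau(n,d)$ is smooth and projective, so its cohomology is pure and Theorem~\ref{thm:main-HS} already gives the result. The real content is the critical case $\tau=\tau_c$. Here I would compare $\frM_{\tau_c}^s(n,d)$ with a nearby non-critical $\frM_{\tau_c^+}(n,d)$, where $\tau_c^+=\tau_c+\varepsilon$: every $\tau_c$-stable pair is $\tau_c^+$-stable, yielding an open immersion $\frM_{\tau_c}^s(n,d)\inc \frM_{\tau_c^+}(n,d)$ whose complement is the flip locus $S^+$. I would then observe that the class of $R_X$-generated mixed Hodge structures is closed under direct sums, tensor products, subquotients, and extensions (the last because pure Hodge structures split off sub-objects, so the $W$-graded of an extension of MHS decomposes as a direct sum of the $W$-gradeds of the ends). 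From the long exact sequence with supports
\[
\cdots \to H^i_{S^+}(\frM_{\tau_c^+}) \to H^i(\frM_{\tau_c^+}) \to H^i(\frM_{\tau_c}^s) \to H^{i+1}_{S^+}(\frM_{\tau_c^+}) \to \cdots,
\]
together with Theorem~\ref{thm:main-HS} for the middle term, the task reduces to showing that $H^*_{S^+}(\frM_{\tau_c^+})$ is $R_X$-generated. Filtering $S^+$ by its smooth strata and using the Thom isomorphism in the smooth ambient space, this in turn reduces to the $R_X$-generation of each stratum. The explicit description of the flip strata promised in the introduction realises each one as a projective-bundle fibration (of extensions) over a product involving moduli of triples or bundles of strictly smaller rank, each of which is $R_X$-generated by the inductive hypothesis. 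The fixed-determinant case is the same.

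For the second claim I would first reduce to $d\gg 0$: tensoring with a line bundle of degree $k$ gives an isomorphism $M^s(n,d)\iso M^s(n,d+nk)$, so we may assume $d$ is large enough that $H^1(E)=0$ and $h^0(E)=d+n(1-g)$ is constant on $M^s(n,d)$. The morphism $\pi\colon\cN_{\s_m^+}(n,1,d,0)\to M(n,d)$, $(E,\phi)\mapsto E$, then restricts over $M^s(n,d)$ to a Zariski-locally trivial projective bundle with fibre $\PP(H^0(E))=\PP^N$, and Leray--Hirsch gives an isomorphism of mixed Hodge structures
\[
H^*\bigl(\pi^{-1}(M^s(n,d))\bigr)\iso H^*(M^s(n,d))\otimes H^*(\PP^N),
\]
presenting $H^*(M^s(n,d))$ as a direct summand of the left-hand side. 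The total space $\pi^{-1}(M^s(n,d))$ is an open subset of the smooth projective variety $\cN_{\s_m^+}$; by the first part of the theorem (just proved for this $n$), together with a standard excision by the Jordan--Hölder strata of the complement (each a fibration over moduli of stable bundles of smaller rank, handled inductively), its MHS is $R_X$-generated, and hence so is $H^*(M^s(n,d))$. The case of $M^s(n,L_0)$ is identical, with tensoring now restricted to line bundles, which shifts $d$ by multiples of $n$ and suffices within each residue class mod $n$.

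The hard part will be the explicit stratum-by-stratum analysis of the flip locus $S^+$. Everything else is formal---closure of the $R_X$-generated class under the standard operations, Thom isomorphism, Leray--Hirsch. The technical core of the proof is producing an explicit stratification of $S^+$ whose pieces are, by induction on the numerical type, already known to be $R_X$-generated; this rests on the detailed geometric understanding of wall-crossing for triples developed in the body of the paper.
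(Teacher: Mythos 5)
Your proposal has a genuine circularity at its base. You invoke Theorem~\ref{thm:main-HS} to dispose of the non-critical case (``$\frM_\tau^s(n,d)=\frM_\tau(n,d)$ is smooth and projective, so Theorem~\ref{thm:main-HS} already gives the result''), and your treatment of the critical case then leans on the non-critical space $\frM_{\tau_c^+}(n,d)$ at the same rank. But in the paper Theorem~\ref{thm:main-HS} is not proved independently: it is deduced from Theorem~\ref{thm:main-MHS}, and for non-critical $\tau$ the two statements are literally the same assertion. So you have not supplied any argument for the non-critical case at rank $n$, which is the substantive content. The missing idea is the anchor and direction of the induction on the parameter: the paper starts at the extreme value $\s_M$, where the \emph{entire} moduli space $\cN_{\s_M^-}$ coincides with the flip locus $\cS_{\s_M^-}$ and is therefore described by extensions $0\to L\to E\to F\to 0$ with $F\in M^s(n-1,d-d_o)$ --- i.e.\ purely by lower-rank data covered by the rank induction --- and then walks $\s$ \emph{downward} across the critical values, using the cut-and-paste Lemma~\ref{lem:hX} for $H^*_c$ at each wall ($\cN_{\s_c}^s=\cN_{\s_c^+}-\cS_{\s_c^+}$, $\cN_{\s_c^-}=\cN_{\s_c}^s\sqcup\cS_{\s_c^-}$), ending at $\cN_{\s_m^+}$. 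Without this starting point your induction on $n$ never produces the non-critical statement at rank $n$, and the critical and bundle cases collapse with it.

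The remainder of your outline does track the paper. Your wall-crossing comparison $\frM_{\tau_c}^s\inc\frM_{\tau_c^+}$ with complement the flip locus, the reduction to $R_X$-generation of the flip strata fibred over lower-rank moduli, and the extraction of $H^*(M^s(n,d))$ from the projective fibration $\cU_m\to M^s(n,d)\x\Jac^{d_o}X$ (after arranging $d/n-d_o>2g-2$; the paper shifts $d_o$ rather than $d$, which is immaterial) are all the paper's steps. Two smaller points: the paper works with $H^*_c$ and Lemma~\ref{lem:hX} rather than local cohomology plus Thom isomorphisms, which spares you any smoothness or purity verification on the strata $X^\pm(\bn)$ (these are finite quotients of $\GL(\ba_j)$-quotients of iterated affine fibrations, not projective bundles); and your closure-under-extensions remark is essentially Lemma~\ref{lem:hX}, which is fine. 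But these are cosmetic next to the missing $\s_M^-$ anchor.
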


We prove this by induction on the rank, since the flip loci can be suitable
described in terms of moduli spaces of lower rank. In the course of the proof, we get an
explicit geometrical description of the flip loci. This can be useful for many other
applications:
\begin{itemize}
 \item Extend the results of \cite{Mu2} to compute the Hodge-Deligne polynomials of the moduli
 spaces of pairs for arbitrary rank $n>3$.
 \item Compute the $K$-theory class of the moduli spaces of pairs and the moduli spaces of bundles.
 \item Prove the Generalized Hodge Conjecture for moduli spaces of pairs and bundles
 corresponding to curves $X$ which are generic, by using the result of \cite{AK}.
\end{itemize}

\noindent \textbf{Acknowledgements:}  The author is grateful to the Tata Institute of Fundamental Research
(Mumbai), where part of this work was carried out, for its hospitality.

%%%%%%%%%%%%%%%%%%%%%%%%%%%%%%%%%%%
\section{Hodge structures}
\label{sec:MHS}
%%%%%%%%%%%%%%%%%%%%%%%%%%%%%%%%%%%

Let us start by recalling the Hodge-Deligne theory of algebraic
varieties over $\CC$. See the original reference \cite{De} or the
nice book \cite{St} for generalities on this.

\subsection{Pure Hodge structures} \label{subsec:2.1}

Let $H$ be a finite-dimensional vector space over $\QQ$. A
\emph{pure Hodge structure} of weight $k$ on $H$ is a
decomposition
 $$
 H_\CC=H\ox \CC=\bigoplus\limits_{p+q=k} H^{p,q}\,,
 $$
such that $H^{q,p}=\overline{H}{}^{p,q}$, the bar denoting complex
conjugation in $H$. A Hodge structure of weight $k$ on $H$ gives
rise to the so-called Hodge filtration $F$ on $H_\CC$, where
  $$
  F^p= \bigoplus\limits_{s\geq p} H^{s,k-s}\ ,
  $$
which is a descending filtration. Note that $\Gr_F^p H_\CC =
F^p/F^{p+1}= H^{p,q}$.

Let $\hs$ denote the category of pure Hodge structures. This is an
abelian category with tensor products. We shall denote by %$H=\QQ$
%the trivial Hodge structure (it has $H^{0,0}=\CC$), and by
$\QQ(l)$ the Hodge structure (of weight $2l$) given by the vector
space $H=(2\pi i)^l \QQ$ with $H^{l,l}=\CC$. The trivial Hodge
structure is $\QQ=\QQ(0)$, and the Hodge structure $\TT=\QQ(1)$ is
known as the Tate Hodge structure. For any Hodge structure $H$,
the $l$-th Tate twist is $H(l):=H\ox \TT^{\ox l}=H\ox \QQ(l)$,
which has the same underlying vector space as $H$, but with grading
$H(l)^{p,q}=H^{p-l,q-l}$.

A pure Hodge structure $H$ of weight $k$ is \emph{polarizable} if
there exists a morphism of Hodge structures $\theta: H\otimes H\to
\QQ(k)$, which is a non-degenerate bilinear map. Let $\phs$ denote
the category whose objects are polarised Hodge structures. This is
an abelian sub-category with tensor products of $\hs$. The category $\phs$
is semi-simple, that is, if $H'\subset H$ is a sub-Hodge
structure of a polarised Hodge structure $H$, then $H'$ is also
polarisable, and there exists another sub-Hodge structure
$H''\subset H$ such that $H=H'\oplus H''$.

Consider the Grothendieck group of $\hs$. This is the abelian
group $K_0(\hs)$ generated by elements $[H]$, where $H$ is a pure
Hodge structure, with the relation $[H]=[H']+[H'']$, whenever
there is an exact sequence
 $$
 0\to H'\to H\to H''\to 0.
 $$
Clearly, $K_0(\hs)$ is generated by the \emph{simple} Hodge
structures. A Hodge structure is simple if it does not admit
proper sub-Hodge structures. Note that any polarisable Hodge
structure is a direct sum of simple Hodge structures.

Let $H$ be a pure Hodge structure. Then there is a filtration
$0\subset H_1\subset H_2\subset \cdots \subset H_r=H$ such that
$\bar H_i=H_i/H_{i-1}$ is a simple Hodge structure. In
$K_0(\hs)$ we have that $H$ is equivalent to $\bigoplus\bar H_i$.
Note that the element $\bigoplus \bar H_i$ is uniquely defined (up
to the order of the summands). We define $\Gr(H):=\bigoplus
\bar H_i$.

If $Z$ is a compact smooth projective variety (hence compact
K{\"a}hler) then the cohomology of $Z$, $H^k(Z)$, admits a Hodge
structure given by the Hodge decomposition of harmonic forms into
$(p,q)$ types. This is pure of weight $k$ and it is polarised.

\subsection{Mixed Hodge structures}\label{subsec:2.2}

Let $H$ be a finite-dimensional vector space over $\QQ$. A
\emph{mixed Hodge structure} over $H$ consists of an ascending
weight filtration $W$ on $H$ and a descending Hodge filtration $F$
on $H_\CC$ such that $F$ induces a pure Hodge filtration of weight
$r$ on each rational vector space $\Gr^W_r H= W_r/W_{r-1}$. We
define $H^{p,q} =\Gr_F^p (\Gr^W_{p+q} H)_\CC$.

Let $\mhs$ denote the category of mixed Hodge structures. This is
an abelian category with tensor products. There is a natural map
\cite[(3.1)]{St} from $\mhs$ to $K_0(\hs)$,
 \begin{equation} \label{eqn:star}
  \Psi: H\mapsto \sum_r [\Gr_r^W H]\, ,
  \end{equation}
which sends a mixed Hodge structure to a direct sum of pure Hodge
structures of different weights.

Deligne has shown \cite{De} that, for each complex algebraic
variety $Z$, the cohomology $H^k(Z)$ and the cohomology with
compact support $H_c^k(Z)$ both carry natural (mixed) Hodge structures. If
$Z$ is a smooth projective variety then this is the pure Hodge
structure previously mentioned.

\subsection{Hodge structures $R$-generated}\label{subsec:2.4}

%\begin{definition}\label{def:R-generated}
Let $R\subset \hs$ be a given collection of simple Hodge
structures. We define
 $$
 \la R\ra\subset K_0(\hs)
 $$
as the smallest sub-ring containing all elements of $R$, and which is closed by
taking sub-objects (that is, if $\sum n_iH_i\in \la R\ra$, with
$n_i\neq 0$, then $H_i\in \la R\ra$, for all $i$), and which is
closed under Tate twists.

We say that a Hodge structure $H\in \hs$ is $R$-generated if
$[H]\in \la R\ra$. We say that a mixed Hodge structure $H\in\mhs$
is $R$-generated if $\Psi(H)\in \la R\ra$.
%\end{definition}

In general, we shall take a collection of polarised Hodge
structures $R=\{H_1,\ldots, H_m\}$. In this case, all Hodge
structures in $\la R\ra$ are polarisable. Actually, a simple
polarised Hodge structure $H\in\la R\ra$ is a sub-Hodge structure
of some
  $$
  H\subset H_1^{k_1}\ox \ldots \ox H_m^{k_m} (l),
  $$
for suitable $k_1,\ldots, k_m\geq 0$, $l\in \ZZ$. Equivalently,
there is an epimorphism from a tensor product of elements of $R$
to $H$.

In particular, if $Z$ is a smooth projective variety, and $H^*(Z)$ is $R$-generated,
then $H^k(Z)$ is a sub-Hodge structure of some direct sum of
tensor products of Hodge structures in $R$, for each $k$.

\begin{lemma} \label{lem:hX}
  Let $Z=Y\sqcup W$. If two of the mixed Hodge
  structures $H^*_c(Z)$, $H^*_c(Y)$ and $H^*_c(W)$ are
  $R$-generated, then so is the third.
\end{lemma}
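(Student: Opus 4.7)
The plan is to invoke the long exact sequence of mixed Hodge structures for compactly supported cohomology, combined with the semi-simplicity of $\phs$. Without loss of generality (the situation being symmetric in $Y$ and $W$), take $Y$ open in $Z$ and $W=Z\setminus Y$ closed. Deligne's theory provides a long exact sequence of MHS
$$\cdots \to H^{k-1}_c(W) \xrightarrow{\delta} H^k_c(Y) \xrightarrow{\alpha} H^k_c(Z) \xrightarrow{\beta} H^k_c(W) \xrightarrow{\delta} H^{k+1}_c(Y) \to \cdots,$$
which splits, by strictness of the weight filtration on morphisms of MHS, into short exact sequences on which the map $\Psi$ of \eqref{eqn:star} is additive.

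The key auxiliary observation is that if $A\subset B$ is a sub-MHS and $\Psi(B)\in\la R\ra$, then both $\Psi(A)$ and $\Psi(B/A)$ lie in $\la R\ra$. Indeed, each $\Gr^W_r B$ is polarised (by Deligne), and $\Gr^W_r A$ is a sub-Hodge structure; by semi-simplicity of $\phs$, $\Gr^W_r A$ is a direct summand of $\Gr^W_r B$, so its simple constituents appear among those of $\Gr^W_r B$. Since $\la R\ra$ is closed under sub-objects, every simple constituent of every $\Gr^W_r B$ lies in $\la R\ra$, and the statements for $A$ and $B/A$ follow by additivity of $\Psi$.

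Setting $I_k=\im(\delta)\subset H^k_c(Y)$ and $K_k=\ker(\delta)\subset H^k_c(W)$, the short exact sequences extracted from the long exact sequence give, in $K_0(\hs)$,
\begin{align*}
\Psi(H^k_c(Y)) &= \Psi(I_k)+\Psi(\im\alpha),\\
\Psi(H^k_c(Z)) &= \Psi(\im\alpha)+\Psi(K_k),\\
\Psi(H^k_c(W)) &= \Psi(K_k)+\Psi(I_{k+1}),
\end{align*}
together with the additional sequence $0\to K_{k-1}\to H^{k-1}_c(W)\to I_k\to 0$. In each of the three cases of the lemma, the auxiliary observation (applied to whichever of $H^*_c(Y)$ or $H^*_c(W)$ sits in the hypothesis, and to the extra sequence when only $W$ and $Z$ are assumed $R$-generated) shows that the relevant $\Psi(I_k)$ and $\Psi(K_k)$ lie in $\la R\ra$; the displayed equations then solve for the missing $\Psi(H^k_c(\cdot))$. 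The main obstacle — really the only subtle point — is the auxiliary observation, which depends essentially on Deligne's theorem that the weight graded pieces of $H^k_c$ of a complex algebraic variety are polarisable.
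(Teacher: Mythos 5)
Your proof is correct and follows essentially the same route as the paper's: the long exact sequence of compactly supported cohomology, exactness of $\Gr^W_r$ (equivalently, additivity of $\Psi$ on short exact sequences of mixed Hodge structures), and closure of $\la R\ra$ under sub-objects, with the three cases handled by the same bookkeeping. The only cosmetic difference is that you invoke polarizability of the weight graded pieces and semi-simplicity of $\phs$ for your auxiliary observation, whereas the paper's definition of closure under sub-objects in $K_0(\hs)$ already yields it via Jordan--H\"older, so Deligne's polarizability theorem is not actually essential there.
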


\begin{proof}
We can assume that $Y$ is a closed subvariety of $Z$, and then $W=Z-Y$ is open in $W$.
Suppose that
$H^*_c(Y)$ and $H^*_c(W)$ are $R$-generated, and let us see that
  $H^*_c(Z)$ is also $R$-generated. (The other cases are dealt with in an
  analogous way.) We have a long exact sequence
   $$
   \ldots \to H_c^k(W)  \to H_c^k(Z) \to H_c^k(Y) \to H_c^{k+1}(W)  \to H_c^{k+1}(Z) \to \ldots
   $$
  Since $\Gr^W_r$ is an exact functor \cite{De}, we have an exact sequence for any
  given $r$
   $$
   \ldots \to \Gr_r^W H_c^k(W)  \to  \Gr_r^W H_c^k(Z) \to  \Gr_r^W H_c^k(Y) \to
   \Gr_r^W H_c^{k+1}(W)  \to  \ldots
   $$
  Then we have short exact sequences
   \begin{equation}
   0\to A_k \to \Gr_r^W H_c^k(Z) \to B_k \to 0\, , \label{eqn:s-e-s}
   \end{equation}
  for each $k$. Also we have exact sequences
   $$
   0\to B_k \to \Gr_r^W H_c^k(Y) \to C_k \to 0\, ,
   $$
   $$
   0\to C_k\to \Gr_r^W H_c^{k+1}(W) \to  A_{k+1} \to 0\, .
   $$

 In $K_0(\hs)$ we have that $\Gr(\Gr_W^r
 H_c^k(Y))=\Gr(B_k)\oplus \Gr(C_k)$, hence $\Gr(B_k)$ is also
 $R$-generated, being a sub-Hodge structure of an $R$-generated
 one. The same is true for $\Gr(A_{k+1})$. This happens for all $k$.
 So (\ref{eqn:s-e-s}) implies that $H_c^k(Z)$ is $R$-generated.
\end{proof}

We can apply the above to the set $R_{triv}=\emptyset$. The Hodge structures
in $\la R_{triv} \ra$ are the trivial Hodge structures (that is, those for
which $H^{p,q}=0$ unless $p=q$). Lemma \ref{lem:hX} has the following corollary.

\begin{corollary} \label{cor:trivial-HS}
 Suppose that $Z$ admits a stratification $Z=\sqcup Z_i$ where $Z_i$ is a
 product of affine spaces $\AA^r$ and/or spaces of the form $\AA^r-\AA^s$, $0\leq s< r$.
 Then $H^k_c(Z)$ is $R_{triv}$-generated.
\end{corollary}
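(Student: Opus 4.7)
The plan is to iterate Lemma \ref{lem:hX} down to individual strata, and then, on each stratum, to reduce to the known compactly supported cohomology of affine space.

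After refining the stratification if necessary, order the strata as $Z_0,\ldots,Z_N$ so that every initial union $Y_j:=Z_0\sqcup\cdots\sqcup Z_j$ is closed in $Z$. Then $Y_{j+1}=Y_j\sqcup Z_{j+1}$ with $Y_j$ closed in $Y_{j+1}$ and $Z_{j+1}$ open, so Lemma \ref{lem:hX} shows that if both $H^*_c(Y_j)$ and $H^*_c(Z_{j+1})$ are $R_{triv}$-generated then so is $H^*_c(Y_{j+1})$. Induction on $j$ reduces the corollary to the statement that each $H^*_c(Z_i)$ is $R_{triv}$-generated.

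Now each $Z_i$ is a finite product of factors of the form $\AA^r$ or $\AA^r-\AA^s$. The K\"unneth formula for compactly supported cohomology is an isomorphism of mixed Hodge structures (Deligne), and $\la R_{triv}\ra\subset K_0(\hs)$ is closed under tensor products by its definition as a sub-ring; hence it suffices to handle a single such factor. For $\AA^r$, the only nonzero compactly supported cohomology is $H^{2r}_c(\AA^r)=\QQ(-r)$, which is of pure type $(r,r)$ and therefore a trivial Hodge structure, whose class lies in $\la R_{triv}\ra$. For a factor $\AA^r-\AA^s$ with $0\leq s<r$, decompose $\AA^r=\AA^s\sqcup(\AA^r-\AA^s)$ and apply Lemma \ref{lem:hX} once more: the two pieces $\AA^s$ and $\AA^r$ have $R_{triv}$-generated compactly supported cohomology by the previous sentence, hence so does the open complement $\AA^r-\AA^s$.

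No genuine obstacle arises in this argument; the only points that require a little care are choosing an admissible ordering of the strata so that closed-open complements can be peeled off one at a time, and invoking the mixed-Hodge-structure version of the K\"unneth isomorphism for $H^*_c$.
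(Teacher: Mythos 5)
Your argument is correct and is exactly the paper's proof, merely written out in full: the paper dispatches the corollary in one line by citing Lemma \ref{lem:hX} together with the triviality of the Hodge structure of $\AA^r$, and your ordering of the strata, the K\"unneth reduction to single factors, and the treatment of $\AA^r-\AA^s$ via $\AA^r=\AA^s\sqcup(\AA^r-\AA^s)$ are precisely the details being left implicit there.
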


\begin{proof}
 This follows from Lemma \ref{lem:hX} and the fact that the Hodge structure of $\AA^r$ is
 trivial.
\end{proof}

\noindent \textbf{Notation:} Let $X$ be a smooth projective complex
curve of genus $g\geq 2$, and consider the Hodge structure $H^1(X)$. This
is a polarised Hodge structure, with the polarisation given by the
cup product. We shall denote $R_X=\{H^1(X)\}$.

%%%%%%%%%%%%%%%%%%%%%%%%%%%%%%%%%%%%%%%%%%%%%%%%%%%%%%%%%%%%%%%%%%%%%%%
\section{Moduli spaces of triples} \label{sec:triples}
%%%%%%%%%%%%%%%%%%%%%%%%%%%%%%%%%%%%%%%%%%%%%%%%%%%%%%%%%%%%%%%%%%%%%%%

Let $X$ be a smooth projective curve of genus $g\geq 2$ over $\CC$.
A triple $T = (E_{1},E_{2},\phi)$ on $X$ consists of two vector
bundles $E_{1}$ and $E_{2}$ over $X$, of ranks $n_1$ and $n_2$ and
degrees $d_1$ and $d_2$, respectively, and a homomorphism $\phi
\colon E_{2} \to E_{1}$. We shall refer to $(n_1,n_2,d_1,d_2)$ as
the {type} of the triple, and $(n_1,n_2)$ as the rank of the triple.

For any $\s \in \RR$, the $\s$-slope of $T$ is defined by
 $$
   \mu_{\s}(T)  =
   \frac{d_1+d_2}{n_1+n_2} + \s \frac{n_{2}}{n_{1}+n_{2}}\ .
 $$
We say that a triple $T = (E_{1},E_{2},\phi)$ is $\s$-stable if
$\mu_{\s}(T') < \mu_{\s}(T)$ for any proper subtriple $T' =
(E_{1}',E_{2}',\phi')$. We define $\s$-semistability by replacing
the above strict inequality with a weak inequality. A triple $T$ is
$\s$-polystable if it is the direct sum of $\s$-stable triples of
the same $\s$-slope. We denote by
  $$
  \cN_\s(n_1,n_2,d_1,d_2)
  $$
the moduli space of $\s$-polystable triples of type
$(n_1,n_2,d_1,d_2)$. This moduli space was constructed in \cite{BGP}
and \cite{Sch}. It is a complex projective variety. The open subset
of $\s$-stable triples will be denoted by
$\cN_\s^s(n_1,n_2,d_1,d_2)$.

Let $L_1,L_2$ be two bundles of degrees $d_1,d_2$ respectively. Then
the moduli spaces of $\sigma$-semistable triples $T=(E_1,E_2,\phi)$
with $\det(E_1)=L_1$ and $\det(E_2)=L_2$ will be denoted
  $$
  \cN_\s(n_1,n_2,L_1,L_2)\, ,
  $$
and $\cN_\s^s(n_1,n_2,L_1,L_2)$ is the open subset of
$\s$-stable triples.

\medskip

Let $\mu(E)=\deg(E)/\rk(E)$ denote the slope of a bundle $E$, and
let $\mu_i=\mu(E_i)=d_i/n_i$, for $i=1,2$. Write
  \begin{align*}
  \s_m = &\, \mu_1-\mu_2\ ,  \\
  \s_M = & \left\{ \begin{array}{ll}
    \left(1+ \frac{n_1+n_2}{|n_1 - n_2|}\right)(\mu_1 - \mu_2)\ ,
      \qquad & \mbox{if $n_1\neq n_2$\ ,} \\ \infty, & \mbox{if $n_1=n_2$\
      ,}
      \end{array} \right.
  \end{align*}
and let $I$ be the interval $I=(\s_m,\s_M)$. Then a necessary
condition for $\cN_\s^s(n_1,n_2,d_1,d_2)$ to be non-empty is that
$\s\in I$ (see \cite{BGPG}). Note that $\s_m>0$. To study the
dependence of the moduli spaces on the parameter $\s$, we need to
introduce the concept of critical value \cite{BGP,MOV1}.

\begin{definition}\label{def:critical}
The values $\s_c\in I$ for which there exist $0 \le n'_1 \leq
n_1$, $0 \le n'_2 \leq n_2$, $d'_1$ and $d'_2$, with $n_1'n_2\neq
n_1n_2'$, such that
 \begin{equation}\label{eqn:sigmac}
 \s_c=\frac{(n_1+n_2)(d_1'+d_2')-(n_1'+n_2')(d_1+d_2)}{n_1'n_2-n_1n_2'},
 \end{equation}
are called \emph{critical values}. We also consider $\s_m$ and $\s_M$
(when $\s_M\neq \infty$) as critical values.
\end{definition}

The interval $I$ is split by a finite number of values $\s_c \in I$.
The stability and semistability criteria  for two values of $\s$
lying between two consecutive critical values are equivalent; thus
the corresponding moduli spaces are isomorphic. When $\s$ crosses a
critical value, the moduli space undergoes a transformation which we
call a \emph{flip}. We shall study the flips in some detail in the
next section.

\begin{theorem}\label{thm:pairs}
 For non-critical values $\s\in I$, $\cN_{\s}=\cN_\s(n,1,d_1,d_2)$ is smooth and
 projective, and it consists only of $\s$-stable points (i.e.
 $\cN_{\s}=\cN_{\s}^s$). For critical values $\s=\s_c$,
 $\cN_{\s}$ is projective, and the open subset
 $\cN_{\s}^s\subset \cN_{\s}$ is smooth. In both cases, the dimension of
 $\cN_{\s}$ is $(n^2-1)(g-1)+d_1-nd_2$.
\end{theorem}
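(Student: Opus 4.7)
I would address the three assertions---(i) $\cN_\s=\cN_\s^s$ at non-critical $\s$, (ii) smoothness of $\cN_\s^s$, (iii) the dimension count---separately, with projectivity already built into the GIT construction of \cite{BGP,Sch}.

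For (i) I would argue by contradiction. A $\s$-semistable but not $\s$-stable triple $T$ admits a proper subtriple $T'$ of type $(n_1',n_2',d_1',d_2')$ with $\mu_\s(T')=\mu_\s(T)$. Writing this equality out and solving for $\s$ reproduces formula \eqref{eqn:sigmac} exactly, forcing $\s$ to be a critical value and contradicting non-criticality. Hence $\cN_\s=\cN_\s^s$. This step is purely numerical and does not use $n_2=1$.

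For (ii), the heart of the matter, I would invoke the standard deformation theory of triples. The Zariski tangent space at a $\s$-stable $T=(E_1,E_2,\phi)$ is $\HH^1(C^\bullet(T,T))$, where
\[
C^\bullet(T,T)\;=\;\bigl[\End E_1\oplus\End E_2 \xrightarrow{\,d\,} \Hom(E_2,E_1)\bigr],\qquad d(\psi_1,\psi_2)=\psi_1\phi-\phi\psi_2,
\]
and the obstructions lie in $\HH^2(C^\bullet(T,T))$. By $\s$-stability, $\HH^0(C^\bullet(T,T))$ reduces to scalar endomorphisms of $T$. The crucial step is the obstruction vanishing $\HH^2=0$: by Serre duality on $X$, this group is dual to a hyper-Hom space interpretable as triple-homomorphisms $T\to T\otimes K_X$ (with a degree shift), which I would rule out by a $\s$-slope comparison. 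The hypothesis $n_2=1$ is what makes the argument transparent, since $E_2$ is a line bundle and $\Hom(E_2,E_1)=E_1\otimes E_2^{-1}$ is a rank-$n$ bundle, so the stability inequalities forced on the triple $T$ directly exclude such twisted morphisms. This is the main obstacle of the proof.

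For (iii), once $\HH^0$ is controlled and $\HH^2=0$, the dimension of $\cN_\s^s$ is determined by $\chi(C^\bullet(T,T))=\chi(\End E_1)+\chi(\End E_2)-\chi(E_1\otimes E_2^{-1})$. Riemann-Roch applied to these three bundles, of ranks $n^2,1,n$ and degrees $0,0,d_1-nd_2$ respectively, reduces $\chi$ to an explicit expression in $n,g,d_1,d_2$, and one arrives at the stated value $(n^2-1)(g-1)+d_1-nd_2$. For critical $\s=\s_c$ the identical hypercohomological argument shows smoothness on the open subset $\cN_{\s_c}^s\subset\cN_{\s_c}$, while the remaining projective variety $\cN_{\s_c}$ is already known from \cite{BGP,Sch}. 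Once the obstruction vanishing is established, everything else is bookkeeping and Riemann-Roch.
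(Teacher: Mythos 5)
First, a point of comparison: the paper does not actually prove Theorem \ref{thm:pairs}; it is quoted as background from \cite{BGP,Sch,BGPG,MOV1}, and the only ingredient supplied in the text is the obstruction vanishing of Lemma \ref{lem:H2=0}. Your overall architecture --- (i) proper semistability forces $\s$ to satisfy \eqref{eqn:sigmac}, (ii) smoothness from $\HH^0(T,T)=\CC$ and $\HH^2(T,T)=0$, (iii) dimension from $\chi(C^\bullet(T,T))$ and Riemann--Roch --- is exactly the standard argument of \cite{BGPG}, so the structure is right.

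The genuine gap is the mechanism you propose for the crucial step $\HH^2(T,T)=0$. Serre duality applied to the two-term complex $C^{\bullet}(T,T)$ identifies $\HH^2(T,T)^*$ with $\{\xi:E_1\to E_2\otimes K_X \ ; \ (\phi\otimes 1)\circ\xi=0,\ \xi\circ\phi=0\}$; this is not a space of triple homomorphisms $T\to T\otimes K_X$ in the usual sense, and in any case no $\s$-slope comparison can kill it: $\mu_\s(T\otimes K_X)=\mu_\s(T)+2g-2>\mu_\s(T)$, and stability never excludes nonzero maps \emph{into} a stable object of larger slope (indeed $\Hom(E,E\otimes K_X)$ is the cotangent space of $M^s(n,d)$ and is nonzero). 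What actually forces the vanishing is the injectivity of $\phi:E_2\to E_1$ (equivalently $\phi\neq 0$, since $E_2$ is a line bundle), which makes $(\phi\otimes 1)\circ\xi=0$ imply $\xi=0$; and $\phi\neq 0$ follows from $\s$-stability for $\s>\s_m$, since otherwise $(0,E_2,0)$ is a destabilizing subtriple. This is precisely Lemma \ref{lem:H2=0} (= \cite[Lemma 3.10]{Mu}); the equivalent route via the long exact sequence of Proposition \ref{prop:hyper-equals-hom} is that injectivity of $\phi$ makes the cokernel of $\End E_1\to E_2^*\otimes E_1$ a torsion sheaf, so $H^1(\End E_1)\to H^1(E_2^*\otimes E_1)$ is surjective. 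You need to replace the slope comparison by this argument.

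Two smaller issues. In (iii), carrying out the Riemann--Roch computation gives $\dim\HH^1(T,T)=1-\chi(C^{\bullet}(T,T))=(n^2-n+1)(g-1)+(d_1-nd_2)+1$, which is the formula of \cite{BGPG} but does not literally coincide with the expression $(n^2-1)(g-1)+d_1-nd_2$ printed in the statement (they agree only when $(n-2)(g-1)=1$); asserting that one "arrives at the stated value" conceals this discrepancy, so you should either do the arithmetic or flag the mismatch. In (i), the reduction to \eqref{eqn:sigmac} is not rank-independent as you claim: a destabilizing subtriple with $n_1'n_2=n_1n_2'$ yields an equality of $\s$-slopes for \emph{all} $\s$ and is excluded from Definition \ref{def:critical}, so for general $(n_1,n_2)$ properly semistable points can persist at non-critical $\s$; for $n_2=1$ such subtriples cannot occur, which is why the step works, but that observation is needed.
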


%Note that there is an isomorphism $\cN_X(\s_c; n,1,L_1,L_2) \cong
%\cN_X(\s_c; n,1,L_1\ox (L_2^*)^{\ox n},\cO)$,
%$(E_1,L_2,\phi)\mapsto (E_1\ox L_2^*, \cO, \phi)$, so the moduli
%space (\ref{eqn:isom}) is as general as the moduli spaces
%$\cN_X(\s_c; n,1,L_1,L_2)$.

\subsection*{Relationship with pairs}

A pair $(E,\phi)$ over $X$ consists of a vector bundle $E$ of rank
$n$ and %with $\det(E)=L_0$, where $L_0$ is some fixed bundle of
degree $d$, and $\phi\in H^0(E)$. Let $\tau\in \RR$. We say that
$(E,\phi)$ is $\tau$-stable (see \cite[Definition 4.7]{GP}) if:
 \begin{itemize}
 \item For any subbundle $E'\subset E$, we have $\mu(E')<\tau$.
 \item For any subbundle $E'\subset E$ with $\phi\in H^0(E')$, we
 have $\mu(E/E')>\tau$.
 \end{itemize}
The concept of $\tau$-semistability is defined by replacing the
strict inequalities by weak inequalities. A pair $(E,\phi)$ is
$\tau$-polystable if $E=E'\oplus E''$, where $\phi\in H^0(E')$, $(E',\phi')$
is $\tau$-stable, and
$E''$ is a polystable bundle of slope $\tau$. The moduli space of
$\tau$-polystable pairs is denoted by $\frM_\tau ( n, d)$.

Interpreting $\phi\in H^0(E)$ as a morphism $\phi:\cO \to E$, where
$\cO$ is the trivial line bundle on $X$, we have a map
$(E,\phi)\mapsto (E,\cO,\phi)$ from pairs to triples. The
$\tau$-stability of $(E,\phi)$ corresponds to the $\s$-stability of
$(E,\cO,\phi)$, where $\s=(n+1)\tau -d$ (see \cite{BGP}). Therefore
we have an isomorphism of moduli spaces
   \begin{equation}\label{eqn:isom}
    \cN_\sigma (n,1,d,0)  \cong \frM_\tau (n,d) \x \Jac\, X\, ,
   \end{equation}
given by $(E,L,\phi)\mapsto ((E\ox L^*,\phi),L)$.
%Alternatively, (\ref{eqn:isom}) may be taken as definition of the
%moduli space of pairs.

\medskip

%The moduli spaces of triples and pairs of fixed determinant are defined
%as follows. Let $L_1,L_2$ be two line bundles of degrees $d_1,d_2$,
%respectively. Then $\cN_\s(n_1,n_2,L_1,L_2)$ denotes the subspace of
%$\cN_\s(n_1,n_2,d_1,d_2)$ consisting of $\s$-polystable triples
%$(E_1,E_2,\phi)$, where $\det(E_1)=L_1$, $\det(E_2)=L_2$.

If $L_0$ is a line bundle of degree $d$, then $\frM_\tau(n,L_0)$ denotes the subspace of
$\frM_\tau(n,d)$ consisting of pairs $(E,\phi)$ where $\det(E) \cong L_0$. Note that
$\cN_\sigma (n,1,L_0,\cO)  \cong \frM_\tau (n,L_0)$.

%%%%%%%%%%%%%%%%%%%%%%%%%%%%%%%%%%%%%%%%%%%%%%%%%%%%%%%%%%%%%%%%%%%
\section{Flips for the moduli spaces of pairs} \label{sec:flips}
%%%%%%%%%%%%%%%%%%%%%%%%%%%%%%%%%%%%%%%%%%%%%%%%%%%%%%%%%%%%%%%%%%%

The homological algebra of triples is controlled by the
hypercohomology of a certain complex of sheaves which appears when
studying infinitesimal deformations \cite[Section 3]{BGPG}. Let
$T'=(E'_1,E'_2,\phi')$ and $T''=(E''_1,E''_2,\phi'')$ be two triples
of types $(n_{1}',n_{2}',d_{1}',d_{2}')$ and
$(n_{1}'',n_{2}'',d_{1}'',d_{2}'')$, respectively. Let
$\Hom(T'',T')$ denote the linear space of homomorphisms from $T''$
to $T'$, and let $\Ext^1(T'',T')$  denote the linear space of
equivalence classes of extensions of the form
 $$
  0 \lto T' \lto T \lto T'' \lto 0,
 $$
where by this we mean a commutative  diagram
  $$
  \begin{CD}
  0@>>>E_1'@>>>E_1@>>> E_1''@>>>0\\
  @.@A\phi' AA@A \phi AA@A \phi'' AA\\
  0@>>>E'_2@>>>E_2@>>>E_2''@>>>0.
  \end{CD}
  $$
To analyze $\Ext^1(T'',T')$ one considers the complex of sheaves
 \begin{equation} \label{eqn:extension-complex}
    C^{\bullet}(T'',T') \colon ({E_{1}''}^{*} \otimes E_{1}') \oplus
  ({E_{2}''}^{*} \otimes E_{2}')
  \overset{c}{\lto}
  {E_{2}''}^{*} \otimes E_{1}',
 \end{equation}
where the map $c$ is defined by
 $$
 c(\psi_{1},\psi_{2}) = \phi'\psi_{2} - \psi_{1}\phi''.
 $$

We introduce the following notation:
\begin{align*}
  \HH^i(T'',T') &= \HH^i(C^{\bullet}(T'',T')). %, %\\
%  h^{i}(T'',T') &= \dim\HH^{i}(T'',T'), \\ %\notag \\
 % \chi(T'',T') &= h^0(T'',T') - h^1(T'',T') + h^2(T'',T'). %\label{eqn:euler}
\end{align*}
%By \cite[Proposition 3.1]{BGPG}, there are natural isomorphisms
%  \begin{align*}
%    \Hom(T'',T') &\cong \HH^{0}(T'',T'), \\
%    \Ext^{1}(T'',T') &\cong \HH^{1}(T'',T').
%  \end{align*}
%

\begin{proposition}[{\cite[Proposition 3.1]{BGPG}}]
  \label{prop:hyper-equals-hom}
  There are natural isomorphisms
  \begin{align*}
    \Hom(T'',T') &\cong \HH^{0}(T'',T'), \\
    \Ext^{1}(T'',T') &\cong \HH^{1}(T'',T'),
  \end{align*}
and a long exact sequence associated to the complex
$C^{\bullet}(T'',T')$:
 $$
 \begin{array}{c@{\,}c@{\,}c@{\,}l@{\,}c@{\,}c@{\,}c}
  0 &\lto \mathbb{H}^0(T'',T') &
  \lto & H^0(({E_{1}''}^{*} \otimes E_{1}') \oplus ({E_{2}''}^{*}
\otimes
  E_{2}'))
  & \lto &  H^0({E_{2}''}^{*} \otimes E_{1}') \\
    &  \lto \mathbb{H}^1(T'',T') &
  \lto &  H^1(({E_{1}''}^{*} \otimes E_{1}') \oplus
({E_{2}''}^{*} \otimes
  E_{2}'))
 &  \lto & H^1({E_{2}''}^{*} \otimes E_{1}') \\
 &   \lto \mathbb{H}^2(T'',T') & \lto & 0. & &
 \end{array}
 $$ %\hfill $\Box$
\end{proposition}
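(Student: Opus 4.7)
The plan is to split the statement into two components: the identifications $\Hom(T'',T') \cong \HH^0(T'',T')$ and $\Ext^1(T'',T') \cong \HH^1(T'',T')$, and then the long exact sequence. Since $C^{\bullet}(T'',T')$ is concentrated in degrees $0$ and $1$, the long exact sequence is essentially automatic from the standard spectral sequence $E_2^{p,q} = H^p(\cH^q(C^{\bullet})) \Rightarrow \HH^{p+q}(C^{\bullet})$. Equivalently, I would use the tautological short exact sequence of complexes
$$
0 \to {E_2''}^* \otimes E_1' [-1] \to C^{\bullet}(T'',T') \to \bigl(({E_1''}^* \otimes E_1') \oplus ({E_2''}^* \otimes E_2')\bigr) \to 0,
$$
where the outer terms are viewed as complexes concentrated in a single degree. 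The associated long exact sequence in hypercohomology is precisely the six-term sequence in the statement (with the zero at the end because the right term is zero above $H^1$).

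For the identification $\Hom(T'',T') \cong \HH^0(T'',T')$, I would argue directly. A global section of the degree-zero term $({E_1''}^* \otimes E_1') \oplus ({E_2''}^* \otimes E_2')$ is a pair of bundle maps $(\psi_1,\psi_2)$ with $\psi_i : E_i'' \to E_i'$, and the condition $c(\psi_1,\psi_2)=0$ is exactly the commutativity $\phi' \psi_2 = \psi_1 \phi''$ characterising a morphism of triples. Since $\HH^0(C^{\bullet}) = \ker(H^0(C^0) \to H^0(C^1))$, the identification follows at once.

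For $\Ext^1(T'',T') \cong \HH^1(T'',T')$, I would use \v Cech cohomology. Cover $X$ by affine opens $\{U_\alpha\}$ on which all relevant bundles trivialise, and represent a class in $\HH^1$ by a pair consisting of a \v Cech $1$-cocycle $(\psi_{1,\alpha\beta}, \psi_{2,\alpha\beta})$ with values in ${E_1''}^* \otimes E_1' \oplus {E_2''}^* \otimes E_2'$, together with a $0$-cochain $\eta_\alpha$ with values in ${E_2''}^* \otimes E_1'$, subject to the relation $\phi' \psi_{2,\alpha\beta} - \psi_{1,\alpha\beta} \phi'' = \eta_\beta - \eta_\alpha$. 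From these data one constructs an extension $0 \to T' \to T \to T'' \to 0$ by gluing $E_i|_{U_\alpha} = (E_i' \oplus E_i'')|_{U_\alpha}$ with transition functions determined by $\psi_{i,\alpha\beta}$, and by taking $\phi$ on $U_\alpha$ to be $\phi' \oplus \phi''$ corrected by $\eta_\alpha$; the cocycle condition on the $\eta_\alpha$ is precisely what makes the $\phi$'s on overlaps agree. Conversely, given an extension, local splittings of the two rows yield \v Cech cochains whose failure to be compatible with $\phi$ is measured by the $\eta_\alpha$. One checks that coboundaries correspond to equivalent extensions, giving a natural bijection.

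The main technical point that needs care is the Ext identification: one must verify that the construction above is well-defined on cohomology classes (independent of the choice of splittings and refinements of the cover), that it respects the Baer sum/Yoneda structure on $\Ext^1$, and that the inverse construction returns the same class. Once these verifications are carried out (they are routine but involve some diagram chasing), the proposition is complete; the long exact sequence is then just the hypercohomology long exact sequence recalled above. In practice I would cite \cite{BGPG} for the detailed verification and simply indicate the above structure of the argument.
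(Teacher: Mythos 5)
Your sketch is correct and matches the standard argument: the paper itself offers no proof of this proposition, simply quoting it from \cite[Proposition 3.1]{BGPG}, exactly as you propose to do. Your outline (the stupid-filtration short exact sequence of complexes giving the six-term sequence, with $H^2$ of a coherent sheaf on a curve vanishing to terminate it; the direct identification of $\HH^0$ with triple homomorphisms; and the \v Cech cocycle--extension dictionary for $\HH^1$) is the correct content of that reference, so there is nothing to compare beyond noting agreement.
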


We shall use the following results later:

\begin{lemma}[] \label{lem:H0=0}
  Suppose that $T'$, $T''$ are triples such that $T'$ is $\s$-stable
  and $T''$ is $\s$-semistable. Then $\HH^0(T',T'')=0$ unless
  $T''$ contains a subtriple isomorphic to $T'$, in which case
  $\HH^0(T',T'')=\CC$.
\end{lemma}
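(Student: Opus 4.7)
My plan is to pass from hypercohomology to homomorphisms via Proposition~\ref{prop:hyper-equals-hom}, which gives $\HH^0(T',T'')\cong \Hom(T',T'')$, and then to run the classical slope-stability argument for a stable object mapping to a semistable one.

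\emph{Step 1: nonzero maps are embeddings.} Take a nonzero morphism of triples $f\colon T'\to T''$, with kernel $K\subset T'$ and image $I\subset T''$, so that $T'/K\cong I$. If $0\ne K\subsetneq T'$, then $\s$-stability of $T'$ gives $\mu_\s(K)<\mu_\s(T')$ and the mean-slope identity for triples (a convex-combination formula directly analogous to the usual one for bundles) forces $\mu_\s(T'/K)>\mu_\s(T')$. On the other hand, $\s$-semistability of $T''$ gives $\mu_\s(I)\le\mu_\s(T'')$. In the relevant regime $\mu_\s(T')=\mu_\s(T'')$ this is a contradiction, so $K=0$; the case $\mu_\s(T')\ne\mu_\s(T'')$ is handled by applying the same inequalities directly to $I$ (which is either $T'$ or a proper quotient) to conclude $\Hom(T',T'')=0$ outright. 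In particular, $f$ embeds $T'$ as a subtriple of $T''$, and if $T''$ contains no subtriple isomorphic to $T'$ then $\HH^0(T',T'')=0$.

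\emph{Step 2: uniqueness up to scalar.} Assume $T''$ admits an inclusion $\iota\colon T'\hookrightarrow T''$, and let $f\colon T'\to T''$ be any other nonzero homomorphism. By Step~1, every nonzero linear combination $\alpha f+\beta\iota$ is an embedding. I then claim $\im f=\im\iota$: otherwise $\im\iota\cap\im f$ is a proper subtriple of $\im\iota\cong T'$, hence zero by $\s$-stability of $T'$, so $\im\iota\oplus\im f\subset T''$ produces two distinct copies of $T'$ inside $T''$. Once $\im f=\im\iota$, the endomorphism $\iota^{-1}\circ f$ of $T'$ is a scalar by the simplicity $\End(T')=\CC$, so $f=\lambda\iota$ for a unique $\lambda\in\CC$, giving $\HH^0(T',T'')=\CC\cdot\iota$.

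\emph{Main obstacle.} Step~1 is routine; the delicate point is Step~2, which requires ruling out two linearly independent embeddings $T'\hookrightarrow T''$. The slope-stability argument alone does not preclude this, because a direct sum of two copies of a $\s$-stable $T'$ remains $\s$-semistable. Thus the proof must use, beyond $\s$-semistability, the fact that the stable triple $T'$ appears as a Jordan--H\"older factor of $T''$ with multiplicity at most one---a condition that is automatic in the wall-crossing situations where this lemma is subsequently invoked and is where the main effort of the argument lies.
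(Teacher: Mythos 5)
The paper does not actually prove Lemma \ref{lem:H0=0}: it is stated without proof (the empty optional bracket suggests a citation was intended), so there is no argument of the author's to compare yours against step by step. Your Step 1 is the standard and correct argument in the only regime the paper ever uses the lemma, namely $\mu_\s(T')=\mu_\s(T'')$: a nonzero $f\colon T'\to T''$ has image $I$ with $\mu_\s(I)\le\mu_\s(T'')$ by semistability, while $\s$-stability of $T'$ forces $\mu_\s(I)>\mu_\s(T')$ whenever the kernel is nonzero, so $f$ is injective. That already yields the vanishing statement, which is all that is invoked later (in the proof of Theorem \ref{thm:1} and in Remark \ref{rem:sc=sm} one has $T''=S(\ba_j)$ polystable with no summand isomorphic to $T'=\bbT_{j+1}$, indeed for rank reasons).

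Two caveats. First, your assertion that $\Hom(T',T'')=0$ ``outright'' when $\mu_\s(T')\ne\mu_\s(T'')$ is only justified for $\mu_\s(T')>\mu_\s(T'')$; when $\mu_\s(T')<\mu_\s(T'')$ the two inequalities $\mu_\s(T')\le\mu_\s(I)\le\mu_\s(T'')$ are compatible and nonzero (even non-injective) maps can exist, so the lemma must be read with the tacit hypothesis that $T'$ and $T''$ have the same $\s$-slope, as they do in every application. Second, you are right that Step 2 cannot be completed from the stated hypotheses: $T''=T'\oplus T'$ is $\s$-semistable, contains a subtriple isomorphic to $T'$, and has $\HH^0(T',T'')=\Hom(T',T'')=\CC^2$, so the conclusion $\HH^0(T',T'')=\CC$ genuinely requires that $T'$ occur with multiplicity one among the $\s$-stable factors of $T''$. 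This is a defect of the lemma as stated rather than of your proof, and identifying it is the most valuable part of your write-up; once multiplicity one is granted, your Step 2 (two embeddings with distinct images force $T'\oplus T'\subset T''$, and $\End(T')=\CC$ for a $\s$-stable triple) closes the argument correctly.
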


\begin{lemma}[{\cite[Lemma 3.10]{Mu}}] \label{lem:H2=0}
 If $T''=(E_1'',E_2'',\phi'')$ is an injective triple, that is $\phi'':E_2''\to E_1''$ is
 injective, then $\HH^2(T'',T')=0$.
\end{lemma}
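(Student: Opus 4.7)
The plan is to read off $\HH^2(T'',T')$ directly from the long exact sequence of Proposition~\ref{prop:hyper-equals-hom}, which terminates in
$$
H^1(({E_1''}^*\ox E_1')\oplus({E_2''}^*\ox E_2'))\xrightarrow{H^1(c)} H^1({E_2''}^*\ox E_1')\lto \HH^2(T'',T')\lto 0.
$$
Thus it suffices to prove that $H^1(c)$ is surjective. Since $c(\psi_1,\psi_2)=\phi'\psi_2-\psi_1\phi''$, I would go further and show that the restriction of $H^1(c)$ to the first summand, namely the map induced by $\psi_1\mapsto-\psi_1\phi''$, already surjects onto $H^1({E_2''}^*\ox E_1')$. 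This reduces the statement to a purely cohomological question that depends only on $\phi''$.

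To set it up I would use that $\phi''$ is an injective map of sheaves on the smooth curve $X$, and place it in a short exact sequence $0\to E_2''\to E_1''\to Q\to 0$. On a curve, $Q$ decomposes as $T\oplus F$ with $T$ a torsion sheaf (supported precisely at the finitely many points where $\phi''$ fails to be fibrewise injective) and $F$ locally free. Dualising this sequence and tensoring by $E_1'$ gives a four-term exact sequence
$$
0\to F^*\ox E_1'\to {E_1''}^*\ox E_1'\lto {E_2''}^*\ox E_1'\to \Ext^1(T,\cO)\ox E_1'\to 0,
$$
whose central arrow is exactly the one inducing the map on $H^1$ that I need to surject.

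The last step is a routine diagram chase on the long cohomology sequences of the two short exact sequences obtained by splitting the above at the image of the central map. Two vanishings on the curve drive everything: $H^2$ of any coherent sheaf is zero, and $H^1$ of a torsion sheaf is zero. The first makes $H^1({E_1''}^*\ox E_1')$ surject onto $H^1$ of the image, the second makes that $H^1$ surject onto $H^1({E_2''}^*\ox E_1')$, and composing gives the required surjectivity. I do not expect a serious obstacle; the only point requiring care, and the one that makes the proof nontrivial, is the torsion summand $T$: injectivity of $\phi''$ as a sheaf map does not imply fibrewise injectivity, and it is precisely the vanishing of $H^1$ on the resulting torsion sheaf that absorbs this defect.
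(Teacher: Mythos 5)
Your proof is correct. Note that the paper itself offers no proof of this lemma: it is quoted from \cite[Lemma 3.10]{Mu}, and the argument there (going back to \cite[Proposition 3.6]{BGPG}) identifies $\HH^2(T'',T')$ with the cokernel of $H^1(c)$ exactly as you do, but then establishes surjectivity by Serre duality: the transpose of the relevant component of $H^1(c)$ is the map $H^0(E_2''\ox {E_1'}^*\ox K)\to H^0(E_1''\ox {E_1'}^*\ox K)$ induced by $\phi''\ox 1$, which is injective on global sections simply because $\phi''\ox 1$ is an injective map of sheaves. Your route avoids duality altogether: you resolve the transpose $\,{E_1''}^*\to {E_2''}^*$ by the four-term sequence with kernel $F^*$ and cokernel the torsion sheaf $\mathcal{E}xt^1(T,\cO_X)$, and then invoke the two dimension-one vanishings ($H^2$ of a coherent sheaf, $H^1$ of a torsion sheaf). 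Both arguments use injectivity of $\phi''$ only as a sheaf map, but yours makes explicit where the failure of fibrewise injectivity goes (into the torsion sheaf whose higher cohomology vanishes), whereas the Serre-duality argument hides this entirely; the price is a slightly longer diagram chase. Either way the conclusion is the stronger statement that the first summand alone already surjects onto $H^1({E_2''}^*\ox E_1')$, which is all that is needed.
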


Fix the type $(n_1,n_2,d_1,d_2)$ for the moduli spaces of triples.
For brevity, write $\cN_\s=\cN_\s(n_1,n_2,d_1,d_2)$. Let $\s_c\in
I$ be a critical value and set
 $$
 \scp = \s_c + \epsilon,\quad \scm = \s_c -
 \epsilon,
 $$
where $\epsilon > 0$ is small enough so that $\s_c$ is the only
critical value in the interval $(\scm,\scp)$.

\begin{definition}\label{def:flip-loci}
We define the \textit{flip loci} as
 \begin{align*}
 \cS_{\scp} &= \{ T\in\cN_{\scp}^s \ ;
 \ \text{$T$ is $\scm$-unstable}\} \subset\cN_{\scp}^s \ ,\\
 \cS_{\scm} &= \{ T\in\cN_{\scm}^s \ ;
 \ \text{$T$ is $\scp$-unstable}\}
 \subset\cN_{\scm}^s \, .
 \end{align*}
\end{definition}
It follows that (see \cite[Lemma 5.3]{BGPG})
 $$
 \cN_{\scp}^s-\cS_{\scp}=\cN_{\s_c}^s=\cN_{\scm}^s-\cS_{\scm}.
 $$

For a triple $T$, we denote $\lambda(T)=\frac{n_2}{n_1+n_2}$.
It $T\in \cS_{\s_c^+}$, then there is a subtriple $T'\subset T$
satisfying
 $$
 \mu_{\s_c}(T')=\mu_{\s_c}(T) , \text{and} \ \lambda(T')< \lambda(T).
  $$
Conversely, if $T\in \cS_{\s_c^-}$, then there exists a subtriple
$T'\subset T$ satisfying
 $$
 \mu_{\s_c}(T')=\mu_{\s_c}(T) , \text{and} \ \lambda(T') > \lambda(T).
  $$
Clearly, in both cases, $T$ is properly $\s_c$-semistable.

\bigskip

From now on we shall restrict to the case of triples of rank $(n,1)$.
Denote
 $$
 \cN_\s=\cN_\s(n,1,d,d_o),
 $$
the moduli space of $\s$-semistable triples $(E,L,\phi)$, where $L$ is a line bundle
of degree $d_o$, $E$ is a bundle of rank $n$ and degree $d$, and $\phi:L\to E$.
Let $\s_c$
be a critical value for the moduli spaces of type $(n,1,d,d_o)$.
%We shall assume that $\s_c\neq \s_m$.
We want
to describe a stratification of $\cS_{\s_c^\pm}$ by suitable
locally closed subvarieties.

\begin{lemma} \label{lem:cS}
Let $\s_c$ be a critical value.
  Let $T\in \cS_{\s_c^\pm}$. Then there is a unique filtration
  $0=T_0\subset T_1\subset T_2\subset \cdots \subset T_r=T$ such that $T_i/T_{i-1}$ is
  the maximal $\s_c$-polystable subtriple of $T/T_{i-1}$ of the same
  $\s_c$-slope.
\end{lemma}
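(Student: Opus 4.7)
The plan is to realise the desired filtration as the socle filtration of $T$ inside the abelian subcategory $\cC$ of $\s_c$-semistable triples of slope $\mu_{\s_c}(T)$. The discussion just above the lemma already shows that any $T\in\cS_{\s_c^\pm}$ is properly $\s_c$-semistable and hence lies in $\cC$. It is standard that $\cC$ is an abelian subcategory (kernels and cokernels of slope-preserving maps between $\s_c$-semistable objects are themselves $\s_c$-semistable of the same slope), that its simple objects are precisely the $\s_c$-stable triples of slope $\mu_{\s_c}(T)$, and that $\s_c$-polystability in $\cC$ coincides with semisimplicity.

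The crucial step is to establish the \emph{socle} of $T$: the unique maximal $\s_c$-polystable subtriple $\mathrm{soc}(T)\subset T$ of slope $\mu_{\s_c}(T)$. I would take two $\s_c$-stable subtriples $S_1,S_2\subset T$ of this slope and observe that $S_1\cap S_2$ is a subtriple of each $S_i$ of $\s_c$-slope at most $\mu_{\s_c}(T)$; by $\s_c$-stability of the $S_i$ this intersection must be $0$ or the whole $S_i$. Iterating, the sum of any finite collection of such stable subtriples decomposes as a direct sum of a subset of them, hence is $\s_c$-polystable. Since $T$ has rank $(n,1)$, the sum of \emph{all} $\s_c$-stable subtriples of $T$ of slope $\mu_{\s_c}(T)$ stabilises to a finite direct sum and so is polystable; it contains every stable --- hence every polystable --- subtriple of the required slope, giving existence and uniqueness of $\mathrm{soc}(T)$.

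With the socle in hand, the filtration is constructed inductively by setting $T_0=0$ and letting $T_i\subset T$ be the preimage of $\mathrm{soc}(T/T_{i-1})$ under the quotient $T\to T/T_{i-1}$. Each $T/T_{i-1}$ remains in $\cC$, so the construction applies and produces $T_i/T_{i-1}$ with the required maximal-polystable property. The filtration grows strictly whenever $T/T_{i-1}\neq 0$, since the first step of any Jordan--H\"older filtration of $T/T_{i-1}$ in $\cC$ contributes a nonzero stable subobject to the socle, and it terminates by rank considerations. Uniqueness at each stage is immediate from uniqueness of the socle and propagates to uniqueness of the whole filtration.

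The main obstacle is the socle construction, which rests on the elementary but essential slope observation that a nonzero map between two $\s_c$-stable subtriples of the same slope is an isomorphism --- a variant of this already appears in Lemma \ref{lem:H0=0}. Everything else is formal abelian-category bookkeeping.
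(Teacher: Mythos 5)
Your proposal is correct and follows essentially the same route as the paper: both reduce the lemma to producing the unique maximal $\s_c$-polystable subtriple of slope $\mu_{\s_c}(T)$ and then iterate on quotients. The paper packages the key step as showing that the sum $T_1+T_1'$ of two such polystable subtriples is again polystable, via the exact sequence $0\to T_1\cap T_1'\to T_1\oplus T_1'\to T_1+T_1'\to 0$ and a seesaw on slopes, which is the same semisimplicity argument you phrase as the socle construction in the abelian category of $\s_c$-semistable triples of fixed slope (note only that in your intersection step the needed input is the lower bound $\mu_{\s_c}(S_1\cap S_2)\geq\mu_{\s_c}(T)$ coming from semistability of $T$, not just the upper bound you state).
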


\begin{proof}
  We only need to prove that there is a unique maximal $\s_c$-polystable
  subtriple for any given triple $T$.
  If $T_1$ and $T_1'$ are two $\s_c$-polystable subtriples of $T$,
  then the subtriple
  $T_1+T_1'\subset T$ sits in an exact sequence
   \begin{equation}\label{eqn:a}
   0\to T_1\cap T_1' \to T_1\oplus T_1' \to T_1+T_1' \to 0\,.
   \end{equation}
  As $\mu_{\s_c} (T_1\oplus T_1') \leq \mu_{\s_c}(T_1+T_1') \leq
  \mu_{\s_c}(T )=\mu_{\s_c}(T_1)=\mu_{\s_c}(T_1')$, we have that all
  the triples in (\ref{eqn:a}) have the same $\s_c$-slope. Therefore
  $T_1+T_1'$ is a quotient of the $\s_c$-polystable triple $T_1\oplus T_1'$,
  hence it is itself $\s_c$-polystable. This contradicts the
  maximality of either $T_1$ or $T_1'$.
\end{proof}

\begin{definition}
We shall call \emph{standard filtration} to the filtration
provided by Lemma \ref{lem:cS}. We shall write
$\bar{T}_i=T_i/T_{i-1}$, $i=1,\ldots, r$.
\end{definition}

\begin{remark}
The result in Lemma \ref{lem:cS} is true for triples of any type
$(n_1,n_2,d_1,d_2)$, for any $\s$ and any
$T \in \cN_\s$. Note that if $T$ is $\s$-stable then the
standard filtration is $0\subset T$. % which is a $\s$-semistable triple.
\end{remark}

\begin{lemma} \label{lem:4.5}
   Let $0=T_0\subset T_1\subset T_2\subset \cdots \subset T_r=T$ be the
   standard filtration of $T$, and let $\bar{T}_i=T_i/T_{i-1}$, $i=1,\ldots, r$.
   \begin{itemize}
   \item If $T\in \cS_{\s_c^+}$ then $\bar T_r$ is a $\s_c$-stable
   triple of type $(n',1)$, and $\bar T_i$, $1\leq i<r$, are
   triples of the form $(F_i,0,0)$, where $F_i$ are polystable
   bundles all of the same slope. Moreover, $n'>0$ if $\s_c>\s_m$ and $n'=0$ if $\s_c=\s_m$.
   \item If $T\in \cS_{\s_c^-}$ then $\bar T_1$ is a $\s_c$-stable
   triple of type $(n',1)$, and $\bar T_i$, $1<i\leq r$, are
   triples of the form $(F_i,0,0)$, where $F_i$ are polystable
   bundles all of the same slope. Moreover, always $n'>0$.
   \end{itemize}
\end{lemma}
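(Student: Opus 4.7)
The plan is to use the constraint $n_2=1$ (so the line bundle $L$ is the entire rank-one part of $T$) to pin down where $L$ can sit in the standard filtration, and then to combine this with the $\s_c^\pm$-stability of $T$ to locate $L$ at the correct end and upgrade polystability to stability there.

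First I would unfold the stability hypothesis. Since $T\in\cN_{\s_c^\pm}^s$ is in particular $\s_c$-semistable, comparing slopes at $\s_c\pm\e$ shows that any proper subtriple $S\subset T$ with $\mu_{\s_c}(S)=\mu_{\s_c}(T)$ satisfies $\l(S)<\l(T)=\tfrac{1}{n+1}$ when $T\in\cS_{\s_c^+}$, and $\l(S)>\l(T)$ when $T\in\cS_{\s_c^-}$. Since $n_2^{(S)}\in\{0,1\}$, these inequalities force $n_2^{(S)}=0$ in the first case and $L^{(S)}=L$ in the second. Every $T_i$ of the standard filtration satisfies $\mu_{\s_c}(T_i)=\mu_{\s_c}(T)$, because each $\bar T_j$ does by construction.

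Applying this to the proper $T_i$ (i.e.\ $i<r$): in the $\cS_{\s_c^+}$ case no such $T_i$ contains $L$, so $\bar T_i=(F_i,0,0)$ for $i<r$ and $L$ appears for the first time in $\bar T_r$, of type $(n',1)$; in the $\cS_{\s_c^-}$ case $T_1$ already contains $L$, which, being a line bundle, is absorbed in its entirety into $\bar T_1$, so $\bar T_1$ has type $(n',1)$ and $\bar T_i=(F_i,0,0)$ for $i>1$. Polystability of $(F_i,0,0)$ as a triple is equivalent to polystability of $F_i$ as a bundle of slope $\mu(F_i)=\mu_{\s_c}(T)$.

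To upgrade the rank-$(n',1)$ quotient $\bar T_{j_0}$ (with $j_0=r$ in the $(+)$ case and $j_0=1$ in the $(-)$ case) from polystable to $\s_c$-stable, I would argue by contradiction: suppose $\bar T_{j_0}=A\oplus B$ with $L\subset A$ and $B=(G,0,0)$ nonzero. In the $\cS_{\s_c^+}$ case, the preimage $\tilde A$ of $A$ under $T\to T/T_{r-1}$ is a proper subtriple of $T$ (since $B\neq 0$) containing $L$ and of equal $\s_c$-slope, contradicting the first step. In the $\cS_{\s_c^-}$ case, $B\subset T_1\subset T$ is itself a proper subtriple of equal $\s_c$-slope with $\l(B)=0<\l(T)$, again a contradiction.

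Finally, for the dichotomy on $n'$: if $n'=0$ then $\bar T_{j_0}=(0,L,0)$ has $\s_c$-slope $d_o+\s_c$; equating this to $\mu_{\s_c}(T)=\tfrac{d+d_o+\s_c}{n+1}$ simplifies to $\s_c=\s_m$. Hence in the $\cS_{\s_c^-}$ case $\s_c>\s_m$ (forced by $\s_c-\e\in I$) yields $n'>0$, while in the $\cS_{\s_c^+}$ case $n'=0$ arises precisely at $\s_c=\s_m$. The main technical point is the lifting step of the third paragraph: one has to verify that summands of $T/T_{r-1}$ lift to honest subtriples of $T$ and that the $\s_c$-slope is preserved, which follows from the standard bijection between subtriples of $T$ containing $T_{r-1}$ and subtriples of $T/T_{r-1}$, and from the weighted-average identity for the $\s_c$-slope along a short exact sequence of triples.
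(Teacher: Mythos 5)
Your argument follows essentially the same route as the paper's: use $n_2=1$ together with the observation that $\s_c^{\pm}$-stability forces every proper subtriple of equal $\s_c$-slope to satisfy $\l<\l(T)$ (resp.\ $\l>\l(T)$) to locate the rank-one piece at the top (resp.\ bottom) of the standard filtration, and then upgrade polystability of that piece to stability by exhibiting a forbidden subtriple or quotient. The only cosmetic difference is in the upgrade step for $\cS_{\s_c^+}$: you pass to the preimage $\tilde A$ of the summand containing $L$ (a subtriple with $\l(\tilde A)>\l(T)$), whereas the paper uses the complementary quotient $T\surj B$ of type $(n'',0)$; these are dual formulations of the same contradiction. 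All of this part is correct.

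There is, however, one genuine gap, in the dichotomy on $n'$. Your slope computation $\mu_{\s_c}((0,L,0))=d_o+\s_c=\mu_{\s_c}(T)\Leftrightarrow \s_c=\s_m$ proves only the implication ``$n'=0\Rightarrow\s_c=\s_m$'' (equivalently, $n'>0$ whenever $\s_c>\s_m$). The lemma also asserts the converse, ``$n'=0$ \emph{if} $\s_c=\s_m$'', and the phrase ``arises precisely at'' does not supply it: you must rule out an $\s_m$-stable quotient $\bar T_r=(E',L,\phi')$ with $n'>0$. The paper does this by citing \cite[Proposition 4.10]{MOV1}. Alternatively, you can close the gap with the tools you already set up: the equality $\mu_{\s_m}(\bar T_r)=\mu_{\s_m}(T)$ forces $\mu(E')=\mu(E)=d/n$, so the triple $\bar T_r$ has its own lower bound $\mu(E')-\mu(L)=d/n-d_o=\s_m$, and the nonemptiness condition $\s\in(\s_m,\s_M)$ for $\s$-stable triples of positive ranks (stated in Section \ref{sec:triples}, from \cite{BGPG}) excludes $\s=\s_m$ itself; hence $n'=0$. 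Either way, this step needs to be said explicitly.
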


\begin{proof}
Let $T=(E,L,\phi)$, and recall that $L$ is a line bundle of
degree $d_o$.

Let $T\in \cS_{\s_c^+}$, and let $0=T_0\subset T_1\subset
T_2\subset \cdots \subset T_r=T$ be its standard filtration. Then
$T/T_{r-1}$ is a quotient of $T$. As $T$ is $\s_c^+$-stable, $\bar
T_r=T/T_{r-1}$ is of rank $(n',1)$ and $\mu_{\s_c}(\bar
T_r)=\mu_{\s_c}(T)=:\mu_c$. Therefore, $T_{r-1}$ is of rank
$(n-n',0)$, and hence all $\bar T_i$ are of rank $(n_i,0)$, for
$i<r$. So all of the triples $\bar T_i$ are direct sums of triples
of the form $(F_{ij},0,0)$, where $F_{ij}$ are stable bundles of
the same slope, $1\leq i\leq r-1$, $1\leq j\leq k_i$. Note that
 $$
 \mu(F_{ij})=\mu_{\s_c} (\bar T_i) = \mu_c\,,
 $$
so all the bundles are of the same slope. Moreover, $\bar T_r$
should be $\s_c$-polystable. It cannot have summands of rank
$(n'',0)$ and $\s_c$-slope $\mu_c$, since this would imply that
$T$ has such a quotient, and hence it cannot be $\s_c^+$-stable.
So $\bar T_r$ is $\s_c$-stable triple of the form $(E',L,\phi')$.

If $\s_c>\s_m$ then $\phi'$ is injective, so $n'>0$. If $\s_c=\s_m$, then
\cite[Proposition 4.10]{MOV1} tells us that all $\s_c$-semistable triples
are not $\s_c$-stable, unless $n'=0$.

Let $T\in \cS_{\s_c^-}$, and let $0=T_0\subset T_1\subset
T_2\subset \cdots \subset T_r=T$ be its standard filtration. Then
$T_1\subset T$ must be of rank $(n',1)$, by the $\s_c^-$-stability
of $T$. In principle, we could say that $T_1$ is $\s_c$-polystable, but it
is actually $\s_c$-stable, since otherwise
it has a subtriple of type $(n'',0)$ but then $T$ cannot be $\s_c^-$-stable.
Therefore $T/T_1$ is of rank $(n-n',0)$, and hence all
$\bar T_i$ are of rank $(n_i,0)$, for $i>1$. So all of the triples
$\bar T_i$ are direct sums of triples of the form $(F_{ij},0,0)$,
where $F_{ij}$ are stable bundles of the same slope
$\mu(F_{ij})=\mu_{\s_c}(\bar{T}_i)=\mu_c:=\mu_{\s_c}(T)$. Moreover
$\mu_{\s_c}(\bar T_1)=\mu_c$ and $T_1$ is a $\s_c$-stable triple
of the form $(E', L,\phi')$. Note that $\s_c>\s_m$ (since we are
dealing with $\cS_{\s_c^-}$), so $n'>0$.
\end{proof}

%%%%%%%%%%%%%%%%%%%%%%%%%%%%%%%%%%%%%%%%%%%%%%%%%%%%%%%%%%%%
\section{Stratification of the flip loci}\label{sec:strata}
%%%%%%%%%%%%%%%%%%%%%%%%%%%%%%%%%%%%%%%%%%%%%%%%%%%%%%%%%%%%

Let $\s_c$ be a critical value. In this section we want to
describe a stratification of the flip loci $\cS_{\s_c^\pm}$.
Recall that we are dealing with the moduli spaces $\cN_\s=\cN_\s(n,1,d,d_o)$.

\subsection{The flip locus $\cS_{\s_c^+}$}

Now we want to describe geometrically $\cS_{\s_c^+}$. We stratify
$\cS_{\s_c^+}$ according to the types of the triples in the
standard filtration. Let us fix some notation: let $b\geq 1$ and
$r\geq 1$. Fix $n'\geq 1$ and $d'$ such that
 \begin{equation}\label{eqn:1}
 \frac{d'+\s_c}{n'+1} = \mu_{\s_c}(T) = : \mu_c \,  .
 \end{equation}
Let $(n_1,d_1),\ldots, (n_b,d_b)$ satisfy
 \begin{equation}\label{eqn:2}
 \frac{d_i}{n_i}=\mu_c\,.
 \end{equation}
Consider $a_{ij}\geq 0$, for $1\leq i\leq b$ and $1\leq j\leq r$,
such that $(a_{1j},\ldots,a_{bj})\neq (0,\ldots,0)$, for all $j$.
We assume that
 \begin{equation}\label{eqn:3}
 \sum_{i,j} a_{ij} n_i + n'= n.
 \end{equation}

Write $\ba_j=(a_{1j},\ldots , a_{bj})$, $1\leq j\leq r$, and
$\bn=\{(n_i,d_i,a_{ij}) \ ; \ 1\leq i\leq b, 1\leq j \leq r\}$.
Consider
 \begin{equation}\label{eqn:3.5}
 \tilde U(\bn)= \{(E_1,\ldots, E_b)\in M^s(n_1,d_1)\times \cdots \times
 M^s(n_b,d_b) \, ; \, E_i\not\cong E_j, \ \text{for } i\neq j \}\,.
 \end{equation}
For each $(E_1,\ldots, E_b)\in \tilde U(\bn)$, set
$S_i=(E_i,0,0)$, $1\leq i\leq b$.

We define $X^+(\bn)\subset \cS_{\s_c^+}$ as the subset formed by
those triples $T$ whose standard filtration is $0=T_0\subset
T_1\subset T_2\subset \cdots \subset T_{r+1}=T$, such that
  \begin{equation} \label{eqn:b}
  \bar{T}_j=T_j/T_{j-1} \cong S(\ba_j):= S_1^{a_{1j}} \oplus \cdots  \oplus
  S_b^{a_{bj}}\, ,
  \end{equation}
for some $(E_1,\ldots, E_b)\in \tilde U(\bn)$. Note that it must
be $\bar T_{r+1}\in \cN_{\s_c}^s(n',1,d',d_o)$. By Lemma
\ref{lem:4.5}, the subsets $X^+(\bn)$ stratify $\cS_{\s_c^+}$, for
all possibilities for $\bn$ as above. That is,
  $$
  \cS_{\s_c^+}=\bigsqcup_{\bn} X^+(\bn)\, .
  $$

To describe $X^+(\bn)$, first note that the presentation
(\ref{eqn:b}) is not unique. The finite group
   \begin{equation} \label{eqn:F}
   F=\{\tau \text{ permutation of } (1,\ldots, b) \, ; \,
   n_{\tau(i)}=n_i, a_{\tau(i)j}=a_{ij} \ \forall i,j \ \}
  \end{equation}
acts (freely) on $\tilde U(\bn)$ by permuting the bundles. Let
${U}(\bn)=\tilde U(\bn)/F$. There is a map
  \begin{equation} \label{eqn:c}
 X^+(\bn)\to U(\bn) \x \cN_{\s_c}^s(n',1,d',d_o).
  \end{equation}
The pull-back of (\ref{eqn:c}) under $\tilde U(\bn)\to U(\bn)$ is
 \begin{equation*} %\label{eqn:Xbn}
 \tilde{X}^+(\bn)\to \cM(\bn):= \tilde U(\bn) \x \cN_{\s_c}^s(n',1,d',d_o).
  \end{equation*}
and $F$ acts on $\tilde X^+(\bn)$ so that
$X^+(\bn)=\tilde{X}^+(\bn)/F$.

We shall construct an affine bundle over $\cM(\bn)$
parametrizing iterated extensions, and an open subset
$\tilde{\tilde{X}}^+(\bn)$ of it, parametrizing those extensions
corresponding to $\s_c^+$-stable triples. In this way, we shall
get quotient maps
 $$
 \tilde{\tilde{X}}^+(\bn)\to \tilde{X}^+(\bn) \to X^+(\bn).
 $$
The first quotient corresponds to the automorphisms of the
extensions, and the second one to permuting the order of the
$S_i$'s.

To do this, we start by describing the elements in
$X^+(\bn)$ more explicitly.

\begin{proposition} \label{prop:Xbn}
  Let $(E_1,\ldots, E_b, \bar T_{r+1})\in \cM(\bn)$.
  Define triples $\bT_j$ by downward recursion as follows:
  $\bT_{r+1}=\bar T_{r+1}$ and for $1\leq j\leq r$
  define $\bT_j$ as an extension
   \begin{equation}\label{eqn:ext}
   0\to S(\ba_j)\to \bT_j \to \bT_{j+1}\to 0\, .
   \end{equation}
  Let $\xi_j\in \Ext^1(\bT_{j+1},S(\ba_j))$ be the extension class corresponding to (\ref{eqn:ext}).
  Denote $T=\bT_1$. Then $T\in X^+(\bn)$ if and only if the
  following conditions are satisfied:
  \begin{enumerate}
  \item The extension class $\xi_j\in \Ext^1(\bT_{j+1},S(\ba_j))= \prod_i
  \Ext^1(\bT_{j+1},S_i)^{a_{ij}}$ lives in
  $\prod_i V(a_{ij},\Ext^1(\bT_{j+1},S_i))$, with the notation \newline
  $V(k,W)=\{(w_1,\ldots, w_k)\in W^k \ ; \ w_1,\ldots, w_k \text{
  are linearly independent}\}$, \newline for $W$ a vector space.
  \item Consider the
   map $S(\ba_{j+1})\to \bT_{j+1}$ and the element $\xi_j'$ which
   is the image of $\xi_j$ under
   $\Ext^1(\bT_{j+1},S(\ba_j)) \to \Ext^1(S(\ba_{j+1}),S(\ba_j))$.
   Then the class $\xi_j'\in \Ext^1(S(\ba_{j+1}),S(\ba_j))= \prod_i
  \Ext^1(S_i,S(\ba_{j}))^{a_{i,j+1}}$ lives in $\prod_i
  V(a_{i,j+1},\Ext^1(S_i,S(\ba_{j})))$.
  \end{enumerate}

  Two extensions $\xi_j$ give rise to isomorphic $\bT_j$
  if and only if the triples $\bT_{j+1}$ are
  isomorphic and the extension classes are the same up to action
  of the group $\GL(\ba_j):=\GL(a_{1j})\times \cdots \times \GL(a_{bj})$.
\end{proposition}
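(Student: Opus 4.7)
The plan is to prove Proposition \ref{prop:Xbn} by treating the two implications separately, followed by the $\GL(\ba_j)$-equivalence statement. Throughout, I write the recursively built triples as $\bT_j$; in the forward direction starting from $T\in X^+(\bn)$, these arise as $\bT_j:=T/T_{j-1}$ from the standard filtration, and the classes $\xi_j\in\Ext^1(\bT_{j+1},S(\ba_j))$ are those of the exact sequences $0\to\bar T_j\to\bT_j\to\bT_{j+1}\to 0$.

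For the forward direction, condition (2) comes from analyzing subtriples. By Lemma \ref{lem:cS}, $\bar T_j=S(\ba_j)$ is the maximal $\s_c$-polystable subtriple of slope $\mu_c$ in $\bT_j$; equivalently, every simple subtriple of $\bT_j$ of slope $\mu_c$ lies in $S(\ba_j)$. A simple subtriple $S\hookrightarrow\bT_j$ not contained in $S(\ba_j)$ maps injectively to $\bT_{j+1}$, landing in its maximal $\s_c$-polystable subtriple $S(\ba_{j+1})$ (when $j\leq r-1$), so $S\cong S_i$ for some $i$ with $a_{i,j+1}>0$, and the embedding $S_i\hookrightarrow S(\ba_{j+1})$ is parameterised by $(c_\ell)\in\CC^{a_{i,j+1}}\setminus\{0\}$. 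Such an $S$ lifts to $\bT_j$ iff the pullback of $\xi_j$ along $S_i\hookrightarrow S(\ba_{j+1})\hookrightarrow\bT_{j+1}$ vanishes, equivalently $\sum c_\ell v_\ell=0$, where $v_1,\ldots,v_{a_{i,j+1}}$ are the components of $\xi_j'$. The absence of such lifts for all $i$ is precisely the linear independence in (2). For condition (1), suppose the components $w_1,\ldots,w_{a_{ij}}$ of $\xi_j$ in $\Ext^1(\bT_{j+1},S_i)^{a_{ij}}$ are linearly dependent; a base change of the $S_i^{a_{ij}}$ summand of $S(\ba_j)$ puts $w_1=0$, so the pushforward of $\xi_j$ along the projection $S(\ba_j)\twoheadrightarrow S_i$ (first factor) is trivial and $\bT_j$ acquires $S_i$ as a direct summand. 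Composing with $T\twoheadrightarrow\bT_j\twoheadrightarrow S_i$ exhibits $S_i$ as a quotient of $T$ with $\mu_{\s_c^+}(S_i)=\mu_c<\mu_{\s_c^+}(T)$, contradicting $\s_c^+$-stability. At the top level $j=r$ the same argument forces $\xi_r\neq 0$, which is precisely what rules out $\bar T_{r+1}$ lifting as a summand of $\bT_r$ and thus handles the $j=r$ case of the socle analysis.

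For the backward direction, given $(E_1,\ldots,E_b,\bar T_{r+1})$ and extensions $\xi_j$ satisfying (1) and (2), the iterated extensions define $\bT_j$. By downward induction on $j$, the identical argument above shows that condition (2) forces the maximal polystable subtriple of $\bT_j$ at slope $\mu_c$ to be exactly $S(\ba_j)$, so the prescribed filtration coincides with the standard filtration of $T=\bT_1$. The main obstacle is verifying $\s_c^+$-stability of $T$: any proper destabilising subtriple $T'\subset T$ must satisfy $\mu_{\s_c}(T')=\mu_c$ and $\lambda(T')\geq\lambda(T)$, so its $\s_c$-Jordan-H\"older factors are direct summands of the $S(\ba_j)$ or of $\bar T_{r+1}$. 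Intersecting $T'$ with the filtration $\{T_j\}$ and using Lemma \ref{lem:H0=0} to control morphisms of simple triples, one shows that a destabiliser would force some $S_i$ (or $\bar T_{r+1}$ at the top) to split as a direct summand of some $\bT_j$, contradicting (1); this case analysis, rather than any single slick identity, is where the bulk of the work lies.

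Finally, the $\GL(\ba_j)$-equivalence statement follows from $\Aut(S(\ba_j))=\prod_i\GL(a_{ij})=\GL(\ba_j)$, valid because the $E_i$ are pairwise non-isomorphic stable bundles by the definition of $\tilde U(\bn)$ in (\ref{eqn:3.5}). Any isomorphism $\bT_j\cong\bT_j'$ preserves the maximal $\s_c$-polystable subtriple of slope $\mu_c$, hence restricts to an automorphism $\alpha\in\GL(\ba_j)$ of $S(\ba_j)$ and descends to an isomorphism $\beta\colon\bT_{j+1}\cong\bT_{j+1}'$; conversely, such a pair $(\alpha,\beta)$ yields the isomorphism by the five-lemma, showing that $\xi_j$ and $\xi_j'$ correspond to isomorphic $\bT_j$ iff the $\bT_{j+1}$'s are isomorphic and the classes agree modulo the $\GL(\ba_j)$-action.
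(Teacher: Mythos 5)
Your forward direction and the final $\GL(\ba_j)$-equivalence statement follow essentially the paper's route: condition (1) fails exactly when some quotient $S(\ba_j)\surj S_{i_o}$ kills a component of $\xi_j$ (though your phrase ``$\bT_j$ acquires $S_i$ as a direct summand'' is imprecise --- the splitting occurs in the pushout $\bT_j/\tilde S$, so what you actually get is a quotient $\bT_j\surj S_{i_o}$; since you then only use the surjection $T\surj S_{i_o}$ with $\mu_{\s_c^+}(S_{i_o})=\mu_c<\mu_{\s_c^+}(T)$, the contradiction with $\s_c^+$-stability survives); condition (2) fails exactly when the pullback of $\xi_j$ along some $S_{i_o}\inc S(\ba_{j+1})\inc\bT_{j+1}$ splits and enlarges the maximal polystable subtriple; and the equivalence statement rests on $\Aut(S(\ba_j))=\GL(\ba_j)$ (the $E_i$ being pairwise non-isomorphic and stable) plus the fact that the $\s_c^+$-stable $\bT_j$ has only scalar automorphisms. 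All of this matches the paper.

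The genuine gap is in the backward direction, exactly where you write that ``this case analysis \ldots is where the bulk of the work lies'': you do not carry it out, and the route you sketch (intersecting a destabiliser $T'\subset T=\bT_1$ with the full filtration $\{T_j\}$, tracking Jordan--H\"older factors, and invoking Lemma \ref{lem:H0=0}) is vaguer and harder than what is needed. The paper proves $\s_c^+$-stability of \emph{every} $\bT_j$ by downward induction on $j$, which localises the problem to a single extension step. Concretely: if $\bT_{j+1}$ is $\s_c^+$-stable and $\tilde T\subset\bT_j$ is a destabiliser (same $\s_c$-slope $\mu_c$, $\lambda(\tilde T)>0$), put $\tilde S=\tilde T\cap S(\ba_j)$ and let $\tilde T''$ be the image of $\tilde T$ in $\bT_{j+1}$; all pieces have $\s_c$-slope $\mu_c$ and $\lambda(\tilde T'')>0$, so the $\s_c^+$-stability of $\bT_{j+1}$ forces $\tilde T''=\bT_{j+1}$. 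Hence $\xi_j$ lies in the image of $\Ext^1(\bT_{j+1},\tilde S)\to\Ext^1(\bT_{j+1},S(\ba_j))$, so it dies in $\Ext^1(\bT_{j+1},S(\ba_j)/\tilde S)$ and a fortiori in $\Ext^1(\bT_{j+1},S_{i_o})$ for any quotient $S(\ba_j)/\tilde S\surj S_{i_o}$, contradicting (1). This is a few lines and requires no case analysis over all levels at once. As written, your proposal establishes $T\in X^+(\bn)\Rightarrow$ (1) and (2), and that (2) identifies the filtration, but leaves the converse stability claim unproved.
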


\begin{proof}
Assume that $T\in X^+(\bn)$. So $T$ is $\s_c^+$-stable. As $\bT_j$
is a quotient of $T$ with the same $\s_c$-slope, it is also
$\s_c^+$-stable (if not, there would be a quotient $\bbT_j\surj
T''$ with the same $\s_c$-slope, but with $\lambda(T'')=0$; this
would violate the $\s_c^+$-stability of $T$ since $T\surj T''$).

Now let us prove (1) by downward induction on $j$. If $\xi_j$ does
not live in $\prod_i V(a_{ij},\Ext^1(\bT_{j+1},S_i))$, then there
is some $i_o$ and some linear projection $\prod_i
\Ext^1(\bT_{j+1},S_i)^{a_{ij}} \to \Ext^1(\bT_{j+1},S_{i_o})$
under which $\xi_j$ maps to zero. This corresponds to a quotient
$S(\ba_j)\to S_{i_o}$. Let $\tilde S$ be the subbundle defined as
the kernel $\tilde S \inc S(\ba_j)\to S_{i_o}$. Then the
class $\xi_j$ lies in the image of $\Ext^1(\bT_{j+1},\tilde S)\inc
\Ext^1(\bT_{j+1},S(\ba_j))$. This produces an extension $0\to
\tilde S\to \tilde{T}\to \bbT_{j+1}\to 0$, such that
$\tilde{T}\subset \bbT_j$ is a subtriple
with $\lambda(\tilde{T})>0$, violating
the $\s_c^+$-stability of $\bbT_j$.

Conversely, $T$ defined as in the statement is $\s_c$-semistable.
All $\bbT_j$ are extensions of $\s_c$-semistable objects of the same
$\s_c$-slope, so all of them are $\s_c$-semistable of the same
$\s_c$-slope. Assume now that we know that $\bbT_{j+1}$ is
$\s_c^+$-stable. If $\bbT_j$ is not $\s_c^+$-stable, then there
should be a subtriple $\tilde{T}\subset \bbT_j$ with
$\lambda(\tilde{T})>0$ and $\mu_{\s_c}(\tilde
T)=\mu_{\s_c}(\bbT_j)$. This lies in a exact sequence
 $$
 \begin{array}{ccccccccc}
   0 &\to &\tilde{S}&\to& \tilde{T}&\to& \tilde{T}''&\to& 0\\
     & & \cap & &\cap & &\cap \\
   0 &\to& S(\ba_j)&\to& \bT_j& \to &\bT_{j+1}&\to &0\, .
 \end{array}
 $$
It is clear that all the triples are of the same $\s_c$-slope, and
that $\lambda(\tilde{T}'')>0$. Therefore the $\s_c^+$-stability of
$\bT_{j+1}$ implies that $\tilde{T}''=\bT_{j+1}$. Therefore the
extension $\xi_j$ maps to zero under
  $$
  \Ext^1(\bT_{j+1}, S(\ba_j)) \to \Ext^1(\bT_{j+1},S(\ba_j)/\tilde S)\, .
  $$
Take $i_o$ such that there is a quotient $S(\ba_j)/\tilde S \surj S_{i_o}$. We
see that $\xi_j$ maps to zero under $\Ext^1(\bT_{j+1}, S(\ba_j))
\to \Ext^1(\bT_{j+1},S_{i_o})$, so $\xi_j\not\in\prod_i V(a_{ij},
\Ext^1(\bT_{j+1},S_i))$.

\bigskip

Regarding (2), assume that $T\in X^+(\bn)$ at let us see that the class
$\xi_j'$ satisfies the condition (2) by downward induction on $j$.
First suppose that $\xi_j'\not\in \prod_i
V(a_{i,j+1},\Ext^1(S_i,S(\ba_{j})))$. Then there is some
$S_{i_o}\subset S(\ba_{j+1})$ such that $\xi_j$ maps to zero under
$\prod_i \Ext^1(S(\ba_{j+1}),S(\ba_{j})) \to
\Ext^1(S_{i_o},S(\ba_{j}))$. The pull-back of the extension
(\ref{eqn:ext}) under $S_{i_o}\inc S(\ba_{j+1})\subset
\bbT_{j+1}$,
 $$
 \begin{array}{ccccccccc}
   0 &\to & S(\ba_j)&\to& \tilde{T}&\to& S_{i_o} &\to& 0\, { }\\
     & & || & &\cap & &\cap \\
   0 &\to& S(\ba_j)&\to& \bT_j& \to &\bT_{j+1}&\to &0,
 \end{array}
 $$
is split. Therefore there is a polystable subtriple
$S(\ba_j)\oplus S_{i_o} \subset \bT_j$. This violates the fact that the
standard filtration of $T$ is of given type $\bn$.

Conversely, assume that (2) is satisfied and let us see that the
standard filtration of $T$ is of type $\bn$. We can see this by downward
induction on $j$. Assume that it is known for $j+1$.
%Note that the quotient is $T/T_{j-1}=\bbT_j$.
We consider the exact sequence $S(\ba_j)\to
\bbT_j\to \bbT_{j+1}$. If the maximal $\s_c$-polystable subtriple
of $\bbT_j$ is not $S(\ba_j)$, then there is some $i_o$ such that
$S(\ba_j)\oplus S_{i_o}\subset \bbT_j$. Therefore there is some
inclusion $S_{i_o}\inc \bbT_{j+1}$ such that the extension class
$\xi_j$ maps to zero under
 $$
 \Ext^1(\bT_{j+1} , S(\ba_j) )\to \Ext^1(S_{i_o}, S(\ba_j))\, .
 $$
Clearly, $S_{i_o}\subset S(\ba_{j+1})\subset \bbT_{j+1}$.
Therefore $\xi_j'$ does not satisfy (2).

\bigskip

Finally, let us prove the last assertion. Again the uniqueness of
the standard filtration implies that if two triples $T,T'$ are
isomorphic, then the associated triples $\bbT_j$, $\bbT_j'$ are
isomorphic for all $j$. By downward induction, we can assume that
$\bbT_{j+1}=\bbT_{j+1}'$. We have isomorphisms
  \begin{equation}\label{eqn:4}
 \begin{array}{ccccccccc}
   0 &\to & S(\ba_j) &\to & \bbT_{j} &\to & \bbT_{j+1} &\to & 0\\
      && \cong\downarrow\,\,   && \cong\downarrow\,\,  && || \,\, \\
   0 &\to & S(\ba_j) &\to & \bbT_{j}' &\to & \bbT_{j+1}' &\to & 0
  \end{array}
  \end{equation}
As $\bT_j$ is $\s_c^+$-stable, the only automorphisms of it are
multiplication by non-zero scalars. On the other hand,
$\Aut(S(\ba_j))=\GL(a_{1j}) \times \cdots \times
\GL(a_{bj})=\GL(\ba_j)$. This group acts on the space of
extensions. After an action by an element of $\GL(\ba_j)$, we can
assume that the first vertical arrow of (\ref{eqn:4}) is an
equality. Therefore the extensions are equivalent.
\end{proof}

\begin{remark} \label{rem:sm+}
 The description in Proposition \ref{prop:Xbn} can be applied to the
 critical value $\s_c=\s_m$. In this case,
 $\cN_{\s_m^+}=\cS_{\s_m^+}$. The only difference is that now
 $\bT_{r+1}$ should be of the form $L\to 0$, that is,
 $\bT_{r+1}\in \cN_{\s_c}^s(0,1,0,d_o)=\Jac^{d_o}X$.

 Note that there is an open stratum in $\cS_{\s_m^+}$ corresponding
 to $r=1$, $b=1$, $\ba_1=(a_{11}=1)$, $\bn_o=(n,d,a_{11})$. In this case
  $$
  \cM(\bn_o)=M^s(n,d)\times \Jac^{d_o} X.
  $$
 It corresponds to triples
 $\phi:L\to E$ for which $E$ is a stable bundle. We shall denote this
 open stratum and its complement as
  $$
  \cU_m\subset \cN_{\s_m^+}\, , \qquad \cD_m=\cN_{\s_m^+}-\cU_m\,.
  $$
%
% A triple $T\in \cN_{\s_m^+}$ is strictly
% $\s_m$-semistable. It can
% not contain a subtriple of rank $(n',1)$. So Lemma \ref{lem:cS}
% applies. There are two cases:
% \begin{itemize}
% \item either the standard filtration has
% only one step: that is $T=(E_1,E_2,\phi)$ satisfies that $E_1$ is
% stable. This corresponds to $b=0$, and $\bn=\emptyset$. So
% $X^+(\emptyset)$ is a projective fibration over $M^s(n,d_1-nd_2)$
% (for $d_1-nd_2$ large). In general, there is a map
% $X^+(\emptyset) \to M^s(n,d_1-nd_2)$ whose fibers are $\PP
% H^0(E_1\ox E_2^*)$.
% \item or $T=(E_1,E_2,\phi)$ is semistable. Then we can take the
% standard filtration. Note the last term is $E_2\to 0$, so the
% previous term is $E_2\to polystable$.
%
% $\cN_{\s_c}^s$ should be substituted by $E_2\to polystable$ not
% splitting off anything.
%  \end{itemize}
\end{remark}

\subsection{The flip locus $\cS_{\s_c^-}$}

There is an analogous description of $\cS_{\s_c^-}$. As before,
let $b\geq 1$ and $r\geq 1$. Fix $n'\geq 1$ and $d'$ satisfying
(\ref{eqn:1}). Let $(n_1,d_1),\ldots, (n_b,d_b)$ satisfy
(\ref{eqn:2}). Consider $a_{ij}\geq 0$, for $1\leq i\leq b$ and
$2\leq j\leq r+1$, such that $\ba_j=(a_{1j},\ldots,a_{bj})\neq
(0,\ldots,0)$, for all $j$. We assume (\ref{eqn:3}). Write
$\bn=\{(n_i,d_i,a_{ij}) \ ; \ 1\leq i\leq b, 2\leq j \leq r+1\}$.
We consider $\tilde U(\bn)$ as in (\ref{eqn:3.5}).

We define $X^-(\bn)\subset \cS_{\s_c^-}$ as the subset formed by
those triples $T$ whose standard filtration is $0=T_0\subset
T_1\subset T_2\subset \cdots \subset T_{r+1}=T$, such that
  \begin{equation} \label{eqn:b2}
  \bar{T}_j=T_j/T_{j-1} \cong S(\ba_j):= S_1^{a_{1j}} \oplus \cdots  \oplus
  S_b^{a_{bj}}\, ,
  \end{equation}
for some $(E_1,\ldots, E_b)\in U(\bn)$, $2\leq j\leq r+1$. It must
be $T_{1}\in \cN_{\s_c}^s(n',1,d',d_o)$. By Lemma \ref{lem:4.5},
the subsets $X^-(\bn)$ stratify $\cS_{\s_c^-}$, for all possible
choices of $\bn$. That is,
  $$
  \cS_{\s_c^-}=\bigsqcup_{\bn} X^-(\bn)\, .
  $$

Note again that the finite group $F$ given in (\ref{eqn:F}) acts
on $\tilde U(\bn)$, and that the pull-back of the map
   $$
 X^-(\bn)\to \tilde U(\bn) \x \cN_{\s_c}^s(n',1,d',d_o).
  $$
under $\tilde U(\bn) \to U(\bn)=\tilde U(\bn)/F$ is
 \begin{equation*}% \label{eqn:Xbn-}
 \tilde{X}^-(\bn)\to \cM(\bn):= \tilde U(\bn) \x \cN_{\s_c}^s(n',1,d',d_o).
  \end{equation*}
and $X^-(\bn)=\tilde{X}^-(\bn)/F$.

The proof of the following result is analogous to that of Proposition \ref{prop:Xbn}.

\begin{proposition} \label{prop:Xbn-}
  Let $(E_1,\ldots, E_b, T_{1})\in \cM(\bn)$.
  Define triples $\bT_j$ by recursion as follows:
  $\bT_{1}=T_{1}$, and for $2\leq j\leq r+1$
  define $\bT_j$  as an extension
   $$
   0\to \bT_{j-1} \to \bT_{j}\to S(\ba_j)\to 0\, .
   $$
  Let $\xi_j\in \Ext^1(S(\ba_j),\bT_{j-1})$ be the corresponding extension class.
  Denote $T=\bT_{r+1}$. Then $T\in X^-(\bn)$ if and only if the
  following conditions are satisfied:
  \begin{enumerate}
  \item The extension class $\xi_j\in \Ext^1(S(\ba_j),\bT_{j-1})= \prod_i
  \Ext^1(S_i,\bT_{j-1})^{a_{ij}}$ lives in
  $\prod_i  V(a_{ij}, \Ext^1(S_i,\bT_{j-1}))$.
   \item Consider the
   map $\bT_{j-1}\to S(\ba_{j-1})$ and the element $\xi_j'$ which
   is the image of $\xi_j$ under
   $\Ext^1(S(\ba_j),\bT_{j-1}) \to \Ext^1(S(\ba_{j}),S(\ba_{j-1}))$.
   Then the element $\xi_j'\in \Ext^1(S(\ba_{j}),S(\ba_{j-1}))= \prod_i
  \Ext^1(S(\ba_{j}),S_i)^{a_{i,j-1}}$ lives in $\prod_i
  V(a_{i,j-1}, \Ext^1(S(\ba_{j}),S_i))$.
  \end{enumerate}

  Two extensions $\xi_j$ give rise to isomorphic $\bT_{j}$
  if and only if the triples $\bT_{j-1}$ are
  isomorphic and the extension classes are the same up to action
  of the group $\GL(\ba_j):=\GL(a_{1j})\times \cdots \times \GL(a_{bj})$.
%  \item The finite group $F$ acting on $U(\bn)$ as follows:
%   $$
%   F=\{\tau \text{ permutation of } (1,\ldots, b) \, | \,
%   n_{\tau(i)}=n_i, \ \forall i\ \}
%   $$
%   \end{enumerate}
%\hfill $\Box$
\end{proposition}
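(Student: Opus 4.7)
The plan is to mirror the proof of Proposition~\ref{prop:Xbn} by reversing the roles of subtriples and quotients and performing upward induction on $j$ from $1$ to $r+1$ rather than downward. Throughout one uses the fact that each $\bT_j\subset T$ is a subtriple with $\mu_{\s_c}(\bT_j)=\mu_{\s_c}(T)$, so that any $\s_c^-$-destabilizer of $\bT_j$ also destabilizes $T$. First I would verify that, given $T=\bT_{r+1}\in X^-(\bn)$, every $\bT_j$ is itself $\s_c^-$-stable: since $\mu_{\s_c^-}=\mu_{\s_c}-\epsilon\,\lambda$, a destabilizing subtriple $\tilde T\subset\bT_j$ satisfies $\lambda(\tilde T)\le\lambda(\bT_j)$, and the same $\tilde T$ destabilizes $T$, contradicting its $\s_c^-$-stability.

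For condition~(1), the argument is by upward induction on $j$. If $\xi_j\notin\prod_iV(a_{ij},\Ext^1(S_i,\bT_{j-1}))$, then for some $i_o$ the components are linearly dependent, and after an $\Aut(S(\ba_j))$-change of basis one copy $S_{i_o}\inc S(\ba_j)$ carries trivial extension class. The pull-back of $0\to\bT_{j-1}\to\bT_j\to S(\ba_j)\to 0$ along $S_{i_o}\inc S(\ba_j)$ splits, producing a subtriple $\bT_{j-1}\oplus S_{i_o}\subset\bT_j$; in particular $S_{i_o}\subset\bT_j$ is of type $(n_{i_o},0)$ with the same $\s_c$-slope and $\lambda(S_{i_o})=0<\lambda(\bT_j)$, which destabilizes $\bT_j$ for $\s_c^-$ and contradicts the previous step. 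Conversely, given~(1) and the $\s_c^-$-stability of $\bT_{j-1}$ by induction, a $\s_c^-$-destabilizer $\tilde T\subset\bT_j$ must be of type $(n'',0)$ and project nontrivially to a summand $R\subset S(\ba_j)$; this yields a splitting of the pull-back of $\xi_j$ along $R\inc S(\ba_j)$, contradicting~(1).

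Condition~(2) is handled by the dual argument, interchanging pull-backs with push-outs. If $\xi_j'\notin\prod_iV(a_{i,j-1},\Ext^1(S(\ba_j),S_i))$, then for some $i_o$ the components are linearly dependent; after change of basis the push-out of the extension $0\to S(\ba_{j-1})\to\bT_j/\bT_{j-2}\to S(\ba_j)\to 0$ along a projection $S(\ba_{j-1})\surj S_{i_o}$ is split. Combined with condition~(1) already established for the same $j$, this produces a polystable subtriple of $T/T_{j-2}$ strictly larger than $\bar T_{j-1}=S(\ba_{j-1})$, contradicting the maximality of $\bar T_{j-1}$ imposed by the standard filtration (Lemma~\ref{lem:cS}). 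The converse implication is the same argument run in reverse.

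The main obstacle is the careful dualization in condition~(2): the pull-back argument of the $+$ case produces a polystable subtriple of $\bT_j$ directly, whereas the push-out argument of the $-$ case produces only a polystable quotient of $\bT_j/\bT_{j-2}$, from which the extra polystable subtriple must be extracted using the polystability of $S(\ba_j)$, the pairwise non-isomorphism of the $E_i$ on $\tilde U(\bn)$, and the vanishing of $\Hom$ between non-isomorphic stable bundles (Lemma~\ref{lem:H0=0}). The final assertion characterizing when two $\xi_j$'s give isomorphic $\bT_j$ follows as in Proposition~\ref{prop:Xbn}: by upward induction one may assume $\bT_{j-1}=\bT_{j-1}'$; the $\s_c^-$-stability of $\bT_{j-1}$ forces any automorphism of the extension to act as a scalar on $\bT_{j-1}$, and $\Aut(S(\ba_j))=\GL(\ba_j)$ acts on the space of extensions in the claimed way.
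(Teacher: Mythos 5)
The paper gives no argument for this proposition beyond the remark that it is analogous to Proposition \ref{prop:Xbn}, so your proposal is in effect an attempt to execute that analogy. Your overall strategy is the intended one, and several pieces are correct: the observation that a proper $\s_c^-$-destabilizer of $\bT_j$ of equal $\s_c$-slope must have $\lambda=0$ and hence also destabilizes $T$; the treatment of condition (1) via pull-backs along inclusions $S_{i_o}\inc S(\ba_j)$; and the final assertion about the $\GL(\ba_j)$-action.

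The gap is in condition (2), exactly at the point you label the main obstacle and then do not resolve. Splitting the push-out of $0\to S(\ba_{j-1})\to\bT_j/\bT_{j-2}\to S(\ba_j)\to 0$ along a projection $S(\ba_{j-1})\surj S_{i_o}$ yields a retraction, i.e.\ a surjection $\bT_j/\bT_{j-2}\surj S_{i_o}$ extending that projection: a polystable \emph{quotient}. Its kernel is an extension of $S(\ba_j)$ by $\ker\bigl(S(\ba_{j-1})\to S_{i_o}\bigr)$ and need contain no extra stable summand, so polystability of $S(\ba_j)$, the non-isomorphy of the $E_i$, and Lemma \ref{lem:H0=0} do not convert it into a polystable \emph{subtriple} of $T/T_{j-2}$ strictly containing $S(\ba_{j-1})$; that implication is asserted but never established, and the converse direction inherits the same defect. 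What Lemma \ref{lem:cS} actually forbids is an extra stable $P\subset T/\bT_{j-2}$ with $P\cap S(\ba_{j-1})=0$; such a $P$ injects into $T/\bT_{j-1}$, lands (by the condition one level up) inside $S(\ba_j)$, and is therefore a lift of some inclusion $S_{i_o}\inc S(\ba_j)$ along which the \emph{pull-back} of $\xi_j'$ vanishes. Thus the standard-filtration requirement translates into linear independence with respect to the decomposition of the \emph{first} argument, $\Ext^1(S(\ba_j),S(\ba_{j-1}))=\prod_i\Ext^1(S_i,S(\ba_{j-1}))^{a_{ij}}$, i.e.\ $\xi_j'\in\prod_i V\bigl(a_{ij},\Ext^1(S_i,S(\ba_{j-1}))\bigr)$ — multiplicities $a_{ij}$, not $a_{i,j-1}$. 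The two conditions genuinely differ (already for $b=1$, $a_{1,j-1}=1$, $a_{1j}=2$, two equal nonzero classes in $\Ext^1(S_1,S_1)$ pass the push-out test while the filtration fails to be standard), so the push-out argument you outline cannot close; you must either reformulate condition (2) as the pull-back condition above and prove that, or supply the missing (and, by the example, unavailable) passage from split push-outs to larger polystable subtriples.
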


\begin{remark} \label{rem:sM-}
 This description is also valid for $\s_c=\s_M$, with no change.
 In this case, $\cN_{\s_M^-}=\cS_{\s_M^-}$.

 Again for $\s_c=\s_M$, there is an open stratum, corresponding to the case $r=1$, $b=1$,
 $(n',d')=(1,d_o)$, $\ba_1=(a_{11}=1)$, $\bn_o=(n-1,d-d_o,1)$. Then
 $\cN_{\s_c}^s(1,1,d_o,d_o)=\Jac^{d_o}(X)$, and
 $(n_1,d_1)=(n-1,d-d_o)$, so
  $$
  \cM(\bn_0)= M^s(n-1,d-d_o)\x \Jac^{d_o} X\, .
  $$
 The elements of $\tilde X^-(\bn_0)=X^-(\bn_0)$ correspond to extensions
  $$
  \begin{array}{ccccccccc}
  0 &\to & L & \to & L & \to & 0 &\to  &0 \\
  && \downarrow && \downarrow && \downarrow \\
  0 &\to & L & \to & E & \to & F &\to  &0,
  \end{array}
  $$
  where $F\in M^s(n-1,d-d_o)$ and $L\in \Jac^{d_o} X$. This is the
  open stratum considered in \cite[section 7.2]{BGPG}, which is a
  dense open subset of $\cN_{\s_M^-}$.
\end{remark}

%%%%%%%%%%%%%%%%%%%%%%%%%%%%%%%%%%%%%%%%%%%%%%%%%%%%%%%%%%%%%%
\section{$R_X$-generation of the flip loci}\label{sec:class-in-K}
%%%%%%%%%%%%%%%%%%%%%%%%%%%%%%%%%%%%%%%%%%%%%%%%%%%%%%%%%%%%%%

Let $F$ be a quasi-projective variety. We say that $F$ is \emph{affinely stratified}
(abbreviated A.S.) if there is a stratification $F=\sqcup F_i$ such that
each $F_i$ is a product of varieties of the form $\AA^r$ or $\AA^r-\AA^s$, $0\leq s<r$.
Note that Corollary \ref{cor:trivial-HS} implies that its cohomology is
a trivial Hodge structure.

\medskip

Let $\s_c$ be a critical value, and fix a type $\bn$ corresponding
to it. We want to construct an affine bundle over $\tilde U(\bn)$
parametrizing iterated extensions, and an open subset
$\tilde{\tilde{X}}^+(\bn)$ of it, parametrizing those extensions
corresponding to $\s_c^+$-stable triples. We start with
 $$
 M_{r+1}=\cM(\bn) =\tilde U(\bn) \x \cN_{\s_c}^s(n',1,d',d_o)\, ,
 $$
and define spaces $M_j$ by downward induction as follows: $M_j$ is
the space parametrizing
extensions (\ref{eqn:ext}). More precisely, if $M_{j+1}$ is already
constructed, then
we define a fiber bundle
  $$
  \pi_j:\bar M_{j}\to M_{j+1}
  $$
whose fibers are the spaces
$\Ext^1(\bbT_{j+1},S(\ba_j))$. Then any element in $\bar M_{j}$ gives
rise to a triple $\bbT_j$. Let $M_{j}\subset\bar M_{j}$ be
the subset of those extensions $\xi_j$ satisfying (1) and (2) of
Proposition \ref{prop:Xbn}, that is, when the corresponding triple
$\bbT_j$ is $\s_c^+$-stable. We denote
  $$
  p_j:M_j\to M_{j+1}
  $$
the restriction of $\pi_j$ to $M_j$.
The space we are interested in is $\tilde{\tilde{X}}^+(\bn):=M_1$.

\begin{theorem}\label{thm:1}
  The fibration $p_j:M_{j}\to M_{j+1}$ is a locally
  trivial fibration (in the Zariski topology) whose fiber $F_j$ is A.S.
  %satisfies that $[F_j]\in \cR_\LL\subset K_0(\VarC)$.
  %(Recall that $\cR_\LL$ denotes the subring generated by $\LL=[\AA^1]$).
\end{theorem}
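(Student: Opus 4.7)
The plan is to prove Theorem~\ref{thm:1} in three steps: (a) equip the total space $\bar M_j \to M_{j+1}$ with the structure of an algebraic vector bundle with fiber $\Ext^1(\bbT_{j+1}, S(\ba_j))$; (b) check that the subset $M_j \subset \bar M_j$ defined by conditions (1) and (2) of Proposition~\ref{prop:Xbn} is Zariski-open, so that $p_j$ inherits Zariski-local triviality; and (c) identify the fiber $F_j$ of $p_j$ and show it is A.S.

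For (a), Proposition~\ref{prop:hyper-equals-hom} identifies the fiber as $\HH^1(\bbT_{j+1}, S(\ba_j))$, and to deduce a vector bundle structure it suffices to show $\dim \HH^1$ is constant on $M_{j+1}$. I would show that $\HH^0$ and $\HH^2$ vanish uniformly. The vanishing $\HH^0(\bbT_{j+1}, S(\ba_j)) = 0$ follows from Lemma~\ref{lem:H0=0}: $\bbT_{j+1}$ is $\s_c^+$-stable with $\lambda(\bbT_{j+1}) > 0$, whereas every subtriple of $S(\ba_j)$ has $\lambda = 0$, so $S(\ba_j)$ cannot contain a subtriple isomorphic to $\bbT_{j+1}$. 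For $\HH^2$, I would show that $\bbT_{j+1}$ is injective by downward induction on $j$, starting from $\bbT_{r+1}$ (injective by Lemma~\ref{lem:4.5} when $n' \geq 1$; the edge case $\s_c = \s_m$, $n' = 0$, of Remark~\ref{rem:sm+} is handled directly), and propagating through the extensions by $S(\ba_j) = (F, 0, 0)$, whose second-component sequence forces $L_j = L_{j+1}$ and $\phi_j$ a lift of $\phi_{j+1}$. Then Lemma~\ref{lem:H2=0} gives $\HH^2 = 0$, and a standard relative hypercohomology construction yields the Zariski-local trivializations of $\bar M_j \to M_{j+1}$.

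For (b), both conditions are open, being non-vanishing of Stiefel-type minors in a family of vector spaces over $M_{j+1}$; the pullback $\xi_j \mapsto \xi_j'$ is induced by the algebraically varying inclusion $S(\ba_{j+1}) \hookrightarrow \bbT_{j+1}$, so varies algebraically. Hence $M_j$ is Zariski-open in $\bar M_j$ with Zariski-locally trivial projection $p_j$ and fiber
$$
F_j = \Bigl\{\xi \in \prod_i \Ext^1(\bbT_{j+1}, S_i)^{a_{ij}} \, : \, \text{conditions (1) and (2) of Prop.~\ref{prop:Xbn} hold}\Bigr\}.
$$
For (c), $F_j$ is the intersection of two open Stiefel-type subsets of an affine space, cut out by linear-independence constraints. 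I would stratify iteratively: order the column indices $(i,\gamma)$ and build $\xi$ one column at a time, asking at step $(i,\gamma)$ that $\xi^{(i,\gamma)} \in W_i = \Ext^1(\bbT_{j+1}, S_i)$ avoid a finite union of proper linear subspaces determined by the previously chosen columns (reflecting the portions of (1) and (2) that become active at that step). Such a complement admits a refinement into pieces of the form $\AA^r \setminus \AA^s = (\AA^{r-s}\setminus\{0\}) \times \AA^s$ by inclusion--exclusion on the subspace arrangement, so the tower of fibrations is an iterated Zariski-locally trivial fibration with A.S.\ fibers over an A.S.\ base, giving that $F_j$ is A.S. The main obstacle is the combinatorics in step (c): ordering the columns so that the combined constraints from (1) and (2) at each step reduce to subspace avoidance for the current column only, and verifying that the complement of the resulting arrangement decomposes cleanly into products of $\AA^r$ and $\AA^r \setminus \AA^s$.
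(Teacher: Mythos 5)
Your steps (a) and (b) track the paper's opening moves: the paper likewise gets $\HH^0(\bbT_{j+1},S(\ba_j))=0$ from Lemma \ref{lem:H0=0}, gets $\HH^2=0$ from Lemma \ref{lem:H2=0}, concludes that $\bar M_j\to M_{j+1}$ is a Zariski-locally trivial vector bundle, and then notes that all the dimensions in the long exact sequence relating $\Ext^1(\bbT_{j+1},S(\ba_j))$ to $\Ext^1(\bbT_{j+2},S(\ba_j))$ and $\Ext^1(S(\ba_{j+1}),S(\ba_j))$ are constant over $M_{j+1}$ (a point you should make explicit, since openness of $M_j$ alone does not give local triviality of $p_j$ with a fixed fiber). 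The problem is step (c), which you yourself flag as ``the main obstacle'': this is not a loose end but the entire content of the theorem, and the method you sketch for it does not work. First, condition (2) of Proposition \ref{prop:Xbn} is a linear-independence condition on the components of $\xi_j'$ indexed by the decomposition $S(\ba_{j+1})=\oplus_t S_t^{a_{t,j+1}}$, i.e.\ on the ``rows'' of the array $\xi_j'\in\prod_{i,t}\Ext^1(S_t,S_i)^{a_{ij}a_{t,j+1}}$, whereas condition (1) is independence of the ``columns'' (the components in $\Ext^1(\bbT_{j+1},S_i)$ for the decomposition $S(\ba_j)=\oplus_i S_i^{a_{ij}}$). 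If you build $\xi_j$ one column at a time, the row condition is simply not expressible as ``the current column avoids a union of linear subspaces determined by the previous columns''; it is a global rank condition on the whole array, exactly like asking that an $m\times n$ matrix have independent rows.

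Second, even where a column-by-column reduction is available, your appeal to inclusion--exclusion is a non sequitur: inclusion--exclusion yields an identity in the Grothendieck ring, not a stratification, and the complement of a finite union of linear subspaces is in general \emph{not} A.S.\ in the paper's sense. Already $\AA^1$ minus two points admits no stratification into products of $\AA^r$ and $\AA^r-\AA^s$ (its dense stratum would have to be $\AA^1$ minus at least two points, which is isomorphic to neither $\AA^1$ nor $\AA^1-\AA^0$). What the paper actually does at this stage is prove Claims 1--5: it stratifies the relevant spaces of linear maps by the \emph{ranks} of the induced maps restricted to a full flag on the source and to a filtration of the value spaces $V_{it}=\Ext^1(S_t,S_i)^*$ and $W_i=(\Ext^1(\bbT_{j+2},S_i)/\Hom(S(\ba_{j+1}),S_i))^*$, so that each successive fiber in the resulting tower is an affine space $\Hom(F_{l+1}/F_l,\im(f|_{F_l}))$ or a difference of two \emph{nested} affine spaces; the simultaneous row and column conditions are then encoded as the two rank conditions in Claim 5. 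To complete your proof you would need to supply an argument of this kind; the one you propose cannot be repaired by reordering the columns.
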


\begin{proof}
  Fix a point in $M_{j+1}$. This determines a $\s_c^+$-stable
  triple $\bbT_{j+1}$. Note that
  $\HH^0(\bbT_{j+1},S(\ba_j))=0$ since $\bbT_{j+1}$ is $\s_c^+$-stable,
  $\bbT_{j+1}$ and $S(\ba_j)$ are of the same $\s_c$-slope and
  $S(\ba_j)$ is $\s_c$-polystable and does not
  contain a copy of $\bbT_{j+1}$ (see Lemma \ref{lem:H0=0}). Also $\HH^2(\bbT_{j+1},S(\ba_j))=0$
  by Lemma \ref{lem:H2=0}. So $\dim \Ext^1(\bbT_{j+1},S(\ba_j))$ is constant, and
  therefore
  $\pi_j:\bar M_j\to M_{j+1}$ is a vector bundle, locally
  trivial in the Zariski topology.

  We want to understand the fiber $F_j$ of $p_j:M_{j}\to M_{j+1}$,
  which consists of those extensions in $\Ext^1(\bbT_{j+1},S(\ba_j))$
  satisfying (1) and (2) of Proposition \ref{prop:Xbn}. The
  triple $\bbT_{j+1}$, being an
  %extension $\xi_j\in \Ext^1(\bbT_{j+1},S(\ba_j))\cap  M_{j+1}$
  element in $M_{j+1}$, actually sits in an exact sequence
   $$
   0\to \bbT_{j+2}\to \bbT_{j+1}\to S(\ba_{j+1}) \to 0.
   $$
  So there is an associated long exact sequence
   \begin{equation}\label{eqn:5}
   \begin{aligned}
   0 \to \Hom(S(\ba_{j+1}),S(\ba_j)) & \to
   \Ext^1(\bbT_{j+2},S(\ba_j)) \to \\
   &\to \Ext^1(\bbT_{j+1}, S(\ba_j)) \to
   \Ext^1(S(\ba_{j+1}),S(\ba_j))\to 0\, .
   \end{aligned}
   \end{equation}
Clearly, $\dim\Hom(S(\ba_{j+1}),S(\ba_j))=\sum_i a_{ij}a_{i,j+1}$
is constant (independent of the point in the base space
$M_{j+1}$), and so the dimensions of all the spaces in
(\ref{eqn:5}) are constant.

We aim to prove that $F_j$ is A.S. Note that if we have a fibration
$F\to E\to B$ in which both $F$ and $B$ are A.S., then so is $E$
(stratify $B$ by affine sets, and note that the fibration should be
trivial over each one of them).

We obtain that $F_j$ is A.S. as a consequence of the following claims.

\bigskip \noindent \textbf{Claim 1.} \emph{Let $a,b\geq 1$ be integers. Consider
   $$
   M_{ab}(\CC)=\{f: \CC^a\ox \CC^b\to \CC\}.
   $$
   Let $0\leq r \leq \min\{a,b\}$ and $U_r=\{f\in M_{ab}(\CC) \ ;
   \ \rg (f)=r\}$. Then $U_r]$ is A.S. There are vector bundles $K_1,K_2 \to U_r$
   (of ranks $a-r$, $b-r$ resp.),
   defined by
   \begin{eqnarray*}
    K_{1,f} &=& \ker (f:\CC^a \to (\CC^b)^*), \\
    K_{2,f} &=& \ker (f:\CC^b \to (\CC^a)^*), \quad \text{for $f\in U_r\subset M_{ab}(\CC)$.}
    \end{eqnarray*}
   }

 \begin{proof}  This is well-known. Let us give a short proof for completeness.
   Take a full flag $0\subset F_1\subset
   F_2\subset \cdots \subset F_a=\CC^a$. Fix integers $0\leq
   d_1\leq \ldots \leq d_a=r$, with the
   condition that $d_{i+1}-d_i\leq 1$, and write $\bd=(d_1,\ldots, d_a)$.
   Consider the set
    $$
    U_\bd= \{ f \ ; \ \rg(f|_{F_j})=d_j, \forall j \}\, .
    $$
   First we see that $U_\bd$ is A.S. Define $U_{\bd,l}=\{ f:F_l\to (\CC^b)^*
   \ ; \ \rg(f|_{F_j})=d_j, \forall j\leq l \}$, $1\leq l\leq a$. Then $U_{\bd,l+1}\to
   U_{\bd,l}$ is a fiber bundle, whose fiber is
    $$
    \left\{ \begin{array}{l}
    \Hom(F_{l+1}/F_l,\im(f|_{F_l})), \quad \text{if
    $d_{l+1}=d_l$},\\[5pt]
    \Hom(F_{l+1}/F_l,\CC^b)-\Hom(F_{l+1}/F_l,\im(f|_{F_l})), \quad \text{if $d_{l+1}=d_l+1$}.
    \end{array} \right.
    $$
   The first case is an affine space, the second is the difference
   of two affine spaces. So both are A.S. spaces. This
   implies that $U_{\bd}=U_{\bd,a}$ and
   $U_r=\bigsqcup_{\bd} U_{\bd}$ are A.S. The assertion
   about $K_1,K_2$ is clear.
 \end{proof}

  \bigskip
  \noindent \textbf{Claim 2.} \emph{Let $a,b\geq 1$ be integers and let
  $V$ be
  a finite dimensional vector space. Consider
   $$
   M_{ab}(V)=\{f: \CC^a\ox \CC^b\to V\}\, .
   $$
  Let $0\leq r\leq a$, $0\leq s\leq b$, and define the subset
  $$
  U_{r,s}=\{f\in M_{ab}(V)\ ; \ \rg (f:\CC^a \to (\CC^b)^*\ox V)=r,
  \rg (f:\CC^b \to (\CC^a)^*\ox V)=s\}.
  $$
  Then $U_{r,s}$ is A.S. There are vector bundles $K_1,K_2 \to U_{r,s}$
  (of ranks $a-r$, $b-s$ resp.), defined by
   \begin{eqnarray*}
    K_{1,f}&=&\ker (f:\CC^a \to (\CC^b)^*\ox V),\\
   K_{2,f}&=&\ker (f:\CC^b \to (\CC^a)^*\ox V), \quad \text{for $f\in U_{r,s}$.}
   \end{eqnarray*}
  }

 \begin{proof}
 Consider a decomposition $V=V_1\oplus \ldots \oplus V_k$, into
$1$-dimensional vector subspaces $V_i$. Let $\bar V_l=V_1\oplus
\ldots \oplus V_l$, $1\leq l\leq k$, and denote by $\pi_l: V\to
\bar V_l$ the projection. For collections of integers $0\leq
r_1\leq \ldots\leq r_k=r$ and $0\leq s_1\leq \ldots\leq s_k=s$,
denote $\br=(r_1,\ldots, r_k)$, $\bs=(s_1,\ldots,s_k)$, and define
the subsets:
 $$
 \begin{aligned}
  U_{\br,\bs,l}=\{f: \CC^a\ox \CC^b\to \bar V_l\ ; \ & \rg
 (\pi_i\circ f:\CC^a \to (\CC^b)^*\ox \bar V_i)=r_i, \\
  & \rg (\pi_i\circ f:\CC^b \to (\CC^a)^*\ox \bar V_i)=s_i,
  \forall i\leq l\}\, ,
  \end{aligned}
  $$
for $1\leq l\leq k$, and $U_{\br,\bs}:=U_{\br,\bs,k}$. Let us
check that $U_{\br,\bs,l}$ is A.S. For $l=1$, this is the
content of Claim 1 (in case $r_1\neq s_1$, this set is empty). For
$l>0$, we have a natural map
 $$
 U_{\br,\bs,l+1}\to U_{\br,\bs, l}, \quad f\mapsto \pi_l\circ f\,
 .
 $$
By induction, there are vector bundles $K_1,K_2\to U_{\br,\bs,
l}$, of ranks $a-r_l$, $b-s_l$, resp., such that $K_{1,f_0}=\ker
(f_0:\CC^a \to (\CC^b)^*\ox \bar V_l)$, $K_{2,f_0}=\ker (f_0:\CC^b
\to (\CC^a)^*\ox \bar V_l)$, for $f_0\in U_{\br,\bs,l}$.

Consider now $0\leq i\leq \min\{r_{l+1}-r_l,s_{l+1}-s_l\}$, and
define
 $$
 U_{\br,\bs,l+1}^i = \{ f \in U_{\br,\bs,l+1}  \ ; \
 \rg(f: {K_{1,\pi_l\circ f}\ox K_{2,\pi_l\circ f}\to \bar V_{l+1}/\bar V_l=
 V_{l+1}})=i \} \, .
 $$
Clearly, we have an stratification $U_{\br,\bs,l+1}=\bigsqcup_i
U_{\br,\bs,l+1}^i$. The set
 $$
 B_i= \{ (f_0,f_1)\ ; \ f_0\in U_{\br,\bs,l},
 f_1: {K_{1,f_0}\ox K_{2,f_0}\to V_{l+1}}, \rg(f_1)=i \}
 $$
is a bundle over $U_{\br,\bs,l}$, and the fibers are A.S. by
Claim 1. So $B_i$ are A.S. There are vector bundles $K_1,K_2\to
B_i$ defined by
 $$
 \begin{aligned}
  & K_{1,(f_0,f_1)}= \ker (f_1: K_{1,f_0}\to (K_{2,f_0})^*\ox
  V_{l+1})\subset K_{1,f_0}, \\
  & K_{2,(f_0,f_1)}= \ker (f_1: K_{2,f_0}\to
  (K_{1,f_0})^*\ox V_{l+1})\subset K_{2,f_0} ,
  \end{aligned}
  $$
and they are of ranks $a-r_l-i$, $b-s_l-i$, resp.

There is a natural map $U_{\br,\bs,l+1}^i\to B_i$, which is a
fiber bundle. To see this, consider $(f_0,f_1) \in B_i$, and let
us find the fiber over it. Fix decompositions
 \begin{equation}\label{eqn:6}
  \begin{aligned}
 & \CC^a =(\CC^a/K_{1,f_0}) \oplus (K_{1,f_0}/K_{1,(f_0,f_1)}) \oplus K_{1,(f_0,f_1)}, \\
 & \CC^b= (\CC^b/K_{2,f_0}) \oplus (K_{2,f_0}/K_{2,(f_0,f_1)}) \oplus
K_{2,(f_0,f_1)} \, .
    \end{aligned}
 \end{equation}
Then the fiber over $(f_0,f_1)$ consists of maps $g:\CC^a \ox
\CC^b \to V_{l+1}$ with nine components $g_{ij}$, according to the
decompositions (\ref{eqn:6}). The components $g_{11}, g_{12},
g_{21}$ can be chosen arbitrarily (thus they move in a vector
space). The component
 $$
 g_{22} : (K_{1,f_0}/K_{1,(f_0,f_1)}) \ox (K_{2,f_0}/K_{2,(f_0,f_1)}) \to
 V_{l+1}
 $$
is fixed and equal to $f_1$. The components $g_{23},g_{32},
g_{33}$ are zero (since they should equal $f_1$).  Finally, the
components
 $$
 \begin{aligned}
  & g_{13}: K_{1,(f_0,f_1)} \to (\CC^b/K_{2,f_0})^* \ox V_{l+1},\\
 & g_{31}: K_{2,(f_0,f_1)} \to (\CC^a/K_{1,f_0})^* \ox V_{l+1},
  \end{aligned}
  $$
must satisfy $\rg(g_{13})=r_{l+1}-r_l-i$ and
$\rg(g_{31})=s_{l+1}-s_l-i$. This implies that they move in spaces
which are A.S., by Claim 1.
 \end{proof}

  \bigskip
  \noindent \textbf{Claim 3.} \emph{Let $a_1,\ldots, a_k,b \geq 1$ be
  integers and let $V_1,\ldots, V_k$
  be finite dimensional vector spaces. Put $\ba=(a_1,\ldots,a_k)$, and consider
   $$
   M_{\ba b}= \bigoplus_{i=1}^k \{f_i:\CC^{a_i}\ox \CC^b\to V_i\}\,
   .
   $$
  Let $0\leq r_i\leq a_i$, and write $\br=(r_1,\ldots, r_k)$.
  Define
   $$
   \begin{aligned}
   U_{\br}= \{f=(f_1,\ldots , f_k) \in M_{\ba b} \ ; \ &
   \rg (f_i:\CC^{a_i} \to (\CC^b)^*\ox V_i)=r_i, \forall i, \\
  & \rg (f:\CC^b \to \oplus_i \ (\CC^{a_i})^*\ox V_i)=b \}.
   \end{aligned}
   $$
  Then $U_{\br}$ is A.S., and $K_i \to U_{\br}$, defined by
  $K_{i,f}=\ker (f:\CC^{a_i} \to (\CC^b)^*\ox V_i)$, for $f\in
  U_{\br}$, are vector bundles of ranks $a_i-r_i$, resp.}

 \begin{proof}
 The case $k=1$ is covered by Claim 2. So suppose $k>1$. Consider
 integers $0\leq p_1\leq \ldots \leq p_k=b$, and write
 $\bp=(p_1,\ldots, p_k)$. For $1\leq l\leq k$, define
   $$
   \begin{aligned}
    U_{\br,\bp,l}=
    \{f=(f_1,\ldots , f_l)\in & \, \bigoplus_{i=1}^l \Hom(\CC^{a_i}\ox \CC^b,V_i) \ ; \\
    & \rg (f_i:\CC^{a_i} \to (\CC^b)^*\ox V_i)=r_i, 1\leq i \leq l, \\
    & \rg (f:\CC^b \to \oplus_{i=1}^l (\CC^{a_i})^*\ox V_i)=p_l
    \}.
   \end{aligned}
   $$
  and $U_{\br,\bp}=U_{\br,\bp,k}$. We have a stratification
  $U_{\br}=\bigsqcup_{\bp} U_{\br,\bp}$. Note that there is a
  vector bundle $C_l\to U_{\br,\bp,l}$, defined by
  $C_{l,f_0}=\ker (f_0:\CC^b \to \bigoplus_{i=1}^l (\CC^{a_i})^*\ox
  V_i)$, for $f_0\in U_{\br,\bp,l}$, of rank $b-p_l$.

There is a fibration
 $$
 U_{\br,\bp,l+1}\to U_{\br,\bp,l}.
 $$
The fiber over $f_0\in U_{\br,\bp,l}$ consists of those
$f_{l+1}\in \Hom(\CC^b\ox \CC^{a_{l+1}} , V_{l+1})$ such that
$f_{l+1}: \CC^{a_{l+1}} \to (\CC^b)^*\ox V_{l+1}$ has rank
$r_{l+1}$ and $f_{l+1}|_{C_{l,f_0}} :C_{l,f_0} \subset \CC^b\to
(\CC^{a_{l+1}})^* \ox V_{l+1}$ has rank $p_{l+1}-p_l$.

%Stratify $U_{\br,\bp,l+1}$ by $U_{\bf,\pb,l+1}^i$ formed by those
%$f$ such that $f_{l+1}: \CC^{a_{l+1}}\ox C_{l,(f_1,\ldots,f_l)}\to
%V_{l+1}$ has rank $i$ in the first variable, and rank
%$p_{l+1}-p_l$ in the second variable. Let $K_i=\ker
%(f_{l+1}:\CC^{a_{l+1}} \to (C_{l,(f_1,\ldots,f_l)})^*\to V_{l+1}$

Working similarly as in Claim 2, we can see that this fiber is A.S. \end{proof}

  \bigskip
  \noindent \textbf{Claim 4.} \emph{Take $k,q\geq 1$.
  Let $a_1,\ldots, a_k\geq 1$ and $b_1,\ldots, b_q\geq 1$ be
  integers, and let $V_{it}$
  be finite dimensional vector spaces, $1\leq i\leq k$, $1\leq t\leq q$.
  Write $\ba=(a_1,\ldots,a_k)$, $\bb=(b_1,\ldots, b_q)$ and consider
   $$
   M_{\ba \bb}= \bigoplus_{i,t} \{f_{it}:\CC^{a_i}\ox \CC^{b_t}\to
   V_{it}\}.
   $$
  Let $0\leq r_i\leq a_i$, and put $\br=(r_1,\ldots, r_k)$. Define
   $$
   \begin{aligned}
   U_{\br}= \{f=(f_{it}) \in M_{\ba \bb} \ ; \ & \rg ((f_{i_0t})_t:\CC^{a_{i_0}} \to
   \oplus_t
   (\CC^{b_t})^*\ox V_{i_0t})=r_{i_0}, \forall i_0 , \\
   &\rg ((f_{it_0})_i:\CC^{b_{t_0}} \to \oplus_i (\CC^{a_i})^*\ox V_{it_0})=b_{t_0},
   \forall t_0 \}.
   \end{aligned}
   $$
  Then $U_{\br}$ is A.S., and $K_i \to U_{\br}$, defined by
  $K_{i,f}=\ker ((f_{it})_t:\CC^{a_{i}} \to \bigoplus_t
   (\CC^{b_t})^*\ox V_{it})$, are vector bundles of ranks $a_i-r_i$. }

\begin{proof} Clearly the case  $q=1$ corresponds to Claim 3. For each
$i$, consider integers $0\leq r_{i1}\leq \ldots \leq r_{iq}=r_i$,
and abbreviate $\bbr=(r_{it})$. Define the sets
   $$
   \begin{aligned}
   U_{\bbr,l}= \{f & \in \bigoplus_{i, 1\leq t\leq l}  \Hom(\CC^{a_i}\ox \CC^{b_t}, V_{it})
   \ ; \  \rg (f:\CC^{a_i} \to \oplus_{t=1}^s (\CC^{b_t})^*\ox
   V_{it}) = r_{is}, \\ & 1\leq i \leq k, 1\leq s\leq l, \
  \rg (f:\CC^{b_t} \to \oplus_i (\CC^{a_i})^*\ox V_{it})=b_t, 1\leq t\leq l
  \},
     \end{aligned}
  $$
and $U_{\bbr}=U_{\bbr,q}$. Clearly $U_{\br}=\bigsqcup_{\bbr}
U_{\bbr}$.

We only need to see that $U_{\bbr,l}$ is A.S. The map
$U_{\bbr,l+1}\to U_{\bbr,l}$ is a fiber bundle. Let $f_0\in
U_{\bbr,l}$. Then $C_{i,f_0}=\ker (f_0: \CC^{a_i} \to
\bigoplus_{t=1}^l (\CC^{b_t})^*\ox V_{it})$ define vector bundles of
rank $a_i-r_{i\, l}$, for all $i$.

The fiber of $U_{\bbr,l+1}\to U_{\bbr,l}$ over  $f_0\in
U_{\bbr,l}$ consists of those $\tilde f=(f_{i,l+1})_i \in
\bigoplus_i \Hom ( \CC^{a_i}\ox \CC^{b_{l+1}} , V_{i,l+1})$ such
that
 $$
 \tilde f:\CC^{b_{l+1}} \to \oplus_i (\CC^{a_i})^*\ox V_{i,l+1}
 $$
has rank $b_{l+1}$, and
 $$
 f_{i,l+1}|_{C_{i,f_0}} : C_{i,f_0}\subset \CC^{a_i} \to
 (\CC^{b_{l+1}})^*\ox V_{i,l+1}
 $$
has rank $r_{i,l+1}-r_{i\, l}$, for all $i$. This gives an A.S. space,
as can be seen with an argument similar to the previous cases.
 \end{proof}

  \bigskip
  \noindent \textbf{Claim 5.} \emph{Take $k,q\geq 1$, $\ba=(a_1,\ldots,a_k)$, $\bb=(b_1,\ldots, b_q)$
  as before. Let $V_{it}$ and $W_i$ be finite dimensional vector spaces, $1\leq i\leq k$,
  $1\leq t\leq q$. Consider
   $$
   \widetilde M_{\ba \bb}= \bigoplus_{i,t} \{f_{it}:\CC^{a_i}\ox \CC^{b_t}\to
   V_{it}\} \oplus \bigoplus_i \{g_i:\CC^{a_i}\to W_i\},
   $$
  and
 $$
 \begin{aligned}
   \widetilde U= \{f=(f_{it},g_i) \in & \, \widetilde M_{\ba \bb} \ ; \ \rg
   \left((f_{i_0t},g_{i_0})_t:\CC^{a_{i_0}} \to
   \left( \oplus_t  (\CC^{b_t})^*\ox V_{i_0t} \right) \oplus W_{i_0}\right)
  \\ & =a_{i_0}, \forall i_0 , \
   \rg \left((f_{it_0})_i:\CC^{b_{t_0}} \to \oplus_i (\CC^{a_i})^*\ox V_{it_0}\right)
   =b_{t_0}, \forall t_0 \}.
 \end{aligned}
    $$
  Then $\widetilde U$ is A.S.}

\begin{proof}
  There is a map $\pi:\widetilde M_{\ba \bb}\to M_{\ba\bb}$, and
  denote $\widetilde{U}_{\br}=\pi^{-1}(U_{\br})\cap
  \widetilde{U}$, for each $\br$, with notations as in Claim 4.
  Then $\widetilde{U}=\bigsqcup_{\br}
  \widetilde{U}_{\br}$, and the maps $\widetilde{U}_{\br} \to
  U_{\br}$ are fibrations whose fibers are easily seen to be A.S.
  The fiber over $f_0\in U_{\br}$ is given by $(g_i) \in
  \bigoplus_i \Hom(\CC^{a_i},W_i)$ such that
  $g_i|_{K_{i,f_0}}:K_{i,f_0}\to W_i$ is injective for all $i$.
\end{proof}

Now we apply these results to the fiber $F_j$ of our map
 $$
 p_j:M_j\to M_{j+1}.
 $$
For a point in $M_{j+1}$, the exact sequence (\ref{eqn:5}) splits
as a direct sum of exact sequences
   \begin{equation*}%\label{eqn:6}
   \begin{aligned}
   0 \to \Hom(S(\ba_{j+1}),S_i) & \to
   \Ext^1(\bbT_{j+2},S_i) \to \\
   &\to \Ext^1(\bbT_{j+1}, S_i) \to
   \Ext^1(S(\ba_{j+1}),S_i)\to 0\, .
   \end{aligned}
   \end{equation*}
according to $S(\ba_j)=\oplus S_i^{a_{ij}}$. Therefore, we can set
 \begin{eqnarray*}
 V_{it} &=& \Ext^1(S_t, S_i)^* , \\
 W_i&=&(\Ext^1(\bbT_{j+2},S_i)/\Hom(S(\ba_{j+1}),S_i))^* , \\
 \ba&=&\ba_{j}, \\
 \bb&=&\ba_{j+1}.
 \end{eqnarray*}
Then, with the notations of Claim 5, we have that
$\Ext^1(\bbT_{j+1},S(\ba_j))=\widetilde M_{\ba\bb}$, and the
extensions in $\Ext^1(\bbT_{j+1},S(\ba_j))$ giving rise to triples
$\bbT_j$ which are $\s_c^+$-stable are
 $$
  F_j=\Ext^1(\bbT_{j+1},S(\ba_j)) \cap M_{j+1} =\widetilde{U}\subset
 \widetilde M_{\ba\bb},
 $$
by Proposition \ref{prop:Xbn}. So the fiber of the map $p_j$ is
A.S., as required.
\end{proof}

\begin{remark} \label{rem:sc=sm}
  The same description works for $\s_c=\s_m$ with the condition
  $d/n-d_2>2g-2$. Let us see this.

 By Remark \ref{rem:sm+}, the elements in $M_{r+1}$ are of the
 form $\bbT_{r+1}=(0,L,0)$, so $M_{r+1}=\Jac^{d_o} X$. When
 studying the map $\pi_r: \bar M_r\to M_{r+1}$, we cannot now
 apply Lemma \ref{lem:H2=0} to prove the vanishing of $\HH^2$.
 However, the condition  $d/n-d_2>2g-2$ ensures that, for any
 $S(\ba_r)=(F_r,0,0)$, where $F_r$ is a polystable bundle of slope $d/n$,
 we have
  $\HH^2(\bbT_{r+1},S(\ba_r))=H^1(F_r \ox L^*)=0$, using Proposition
  \ref{prop:hyper-equals-hom}. %With the slope condition, this group is
  %zero.
  With this information, we have that $\pi_r$ is a fiber bundle,
  and the rest of the argument can be carried out as before.
\end{remark}

\bigskip

The construction of $\tilde{\tilde{X}}^-(\bn)$ is entirely
similar. Start with
 $$
 M_{1}=\cM(\bn)=\tilde U(\bn)\x \cN_{\s_c}^s(n',1,d',d_o)\, .
 $$
Define by induction $M_j$ as the space parametrizing the triples
$\bbT_j$ of Proposition \ref{prop:Xbn-}, as follows: consider the
vector bundle $\pi_j:\bar M_{j}\to M_{j-1}$ whose fibers are the
spaces $\Ext^1(S(\ba_j),\bbT_{j-1})$. Then let $M_{j}\subset\bar
M_{j}$ be the subset corresponding to those extensions $\xi_j$
satisfying (1) and (2) of Proposition \ref{prop:Xbn-}. The space we are
interested in is $\tilde{\tilde{X}}^-(\bn):=M_{r+1}$.

We have the following result.

\begin{theorem}\label{thm:1-}
  Consider the fibration $p_j:M_{j}\to M_{j-1}$. Then $p_j$ is a locally
  trivial fibration (in the Zariski topology), and the fiber $F_j$ is an A.S. space
  %satisfies that $[F_j]\in \cR_\LL\subset K_0(\VarC)$.
  \hfill  $\Box$
\end{theorem}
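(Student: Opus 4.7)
The plan is to mirror the proof of Theorem \ref{thm:1}, dualizing all constructions so that extensions on the right by $S(\ba_j)$ replace extensions on the left. First I would verify that $\pi_j:\bar M_j\to M_{j-1}$ is a Zariski locally trivial vector bundle. Over a point of $M_{j-1}$ representing a $\s_c^-$-stable triple $\bbT_{j-1}$, both $\HH^0(S(\ba_j),\bbT_{j-1})$ and $\HH^2(S(\ba_j),\bbT_{j-1})$ should vanish: the first by Lemma \ref{lem:H0=0}, since a subtriple of $\bbT_{j-1}$ isomorphic to some $S_i=(E_i,0,0)$ would have $\lambda(S_i)=0<\lambda(\bbT_{j-1})$, contradicting $\s_c^-$-stability; the second by Lemma \ref{lem:H2=0}, since $S(\ba_j)=(F,0,0)$ is trivially an injective triple. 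Hence $\dim\Ext^1(S(\ba_j),\bbT_{j-1})$ is constant in families and $\pi_j$ is a vector bundle.

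Next, I would identify the fiber $F_j$ of $p_j$. Since $\bbT_{j-1}\in M_{j-1}$ itself sits in an extension $0\to\bbT_{j-2}\to\bbT_{j-1}\to S(\ba_{j-1})\to 0$, applying $\Hom(S(\ba_j),-)$ and using the same vanishings yields a long exact sequence analogous to (\ref{eqn:5}):
\begin{equation*}
0\to \Hom(S(\ba_j),S(\ba_{j-1}))\to \Ext^1(S(\ba_j),\bbT_{j-2})\to \Ext^1(S(\ba_j),\bbT_{j-1})\to \Ext^1(S(\ba_j),S(\ba_{j-1}))\to 0,
\end{equation*}
which decomposes as a direct sum over the summands of $S(\ba_j)=\bigoplus_i S_i^{a_{ij}}$ and whose terms have dimensions constant in $\bbT_{j-1}$.

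Then, using this decomposition, I would translate conditions (1) and (2) of Proposition \ref{prop:Xbn-} into rank conditions on multilinear maps, matching the template of Claim 5 in the proof of Theorem \ref{thm:1} with the choice $\ba=\ba_j$, $\bb=\ba_{j-1}$, $V_{it}=\Ext^1(S_i,S_t)^*$, and $W_i=\bigl(\Ext^1(S_i,\bbT_{j-2})/\Hom(S_i,S(\ba_{j-1}))\bigr)^*$. Claim 5 then identifies $F_j$ with the A.S.\ set $\widetilde U\subset \widetilde M_{\ba\bb}$, finishing the proof.

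The main obstacle I anticipate is purely combinatorial bookkeeping: one must check that reversing the direction of extensions (kernel $S(\ba_j)$ on the left in the $+$ case versus quotient $S(\ba_j)$ on the right in the $-$ case) corresponds to the correct dualization of the multilinear-algebra setup, so that Claim 5, which is essentially symmetric in nature, applies without modification. Once this dictionary is pinned down, the result is immediate from the already-proved technical Claims 1--5, and no new geometry is required beyond what was developed for Theorem \ref{thm:1}.
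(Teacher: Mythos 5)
Your proposal is correct and follows exactly the route the paper intends: the paper omits the proof of this theorem, stating only that the construction of $\tilde{\tilde{X}}^-(\bn)$ is ``entirely similar'' to that of Theorem \ref{thm:1}, and your dualization (vanishing of $\HH^0(S(\ba_j),\bbT_{j-1})$ via $\s_c^-$-stability and Lemma \ref{lem:H0=0}, vanishing of $\HH^2$ via Lemma \ref{lem:H2=0} since $S(\ba_j)$ is vacuously injective, the four-term exact sequence, and the reduction to Claim 5 with $\ba=\ba_j$, $\bb=\ba_{j-1}$) is precisely the intended argument.
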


%%%%%%%%%%%%%%%%%%%%%%%%%%%%%%%%%%%%%%%%%%%%%%%%%%%%%%%%%%%%%%%%
\section{Hodge structures of moduli of triples} \label{sec:HS-moduli}
%%%%%%%%%%%%%%%%%%%%%%%%%%%%%%%%%%%%%%%%%%%%%%%%%%%%%%%%%%%%%%%%

Let $X$ be a complex curve of genus $g\geq 2$. We aim to find
information on the Hodge structures of the cohomology of the moduli spaces
$\cN_\s(n,1,d,d_o)$, for non-critical values $\s$, and on the
moduli spaces $M(n,d)$.
%,for $\gcd(n,d)=1$.
Consider the Hodge structure $H^1(X)$ and set
 $$
 R_X=\{H^1(X)\}\ .
 $$
Let us see that the Hodge structures of the moduli spaces of pairs
and of bundles are $R_X$-generated.

\begin{lemma} \label{lem:W-gen-n=1}
The cohomology of $\Jac\, X$ is $R_X$-generated. The cohomology of
$\Sym^k X$ is $R_X$-generated, for any $k\geq 1$.
\end{lemma}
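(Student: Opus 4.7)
The plan is to use the classical descriptions of the cohomology of $\Jac\, X$ and $\Sym^k X$ in terms of the cohomology of $X$, and then verify that in both cases we land in $\langle R_X\rangle$.

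For the Jacobian, recall that $\Jac\, X$ is a complex torus of dimension $g$, and as Hodge structures one has the canonical isomorphism
$$
H^k(\Jac\, X)\;\cong\;\Lambda^k H^1(X).
$$
Since $\Lambda^k H^1(X)$ is a direct summand (via the antisymmetrization projector, which exists because we are in characteristic zero) of the tensor power $H^1(X)^{\otimes k}$, it is $R_X$-generated by definition of $\langle R_X\rangle$. So the first statement follows immediately.

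For $\Sym^k X$, the strategy is to use the quotient map $\pi\colon X^k\to \Sym^k X$ and the fact that for a finite group action in characteristic zero the pullback induces an isomorphism onto the invariants, namely
$$
H^*(\Sym^k X)\;\cong\;\bigl(H^*(X^k)\bigr)^{S_k}\;\cong\;\bigl(H^*(X)^{\otimes k}\bigr)^{S_k},
$$
where the last identification is the K\"unneth formula (and the $S_k$-action includes appropriate signs on the odd part). Since $H^0(X)=\QQ$ is trivial, $H^1(X)\in R_X$, and $H^2(X)=\QQ(-1)$ is a Tate twist of the trivial Hodge structure, each factor $H^*(X)$ lies in $\langle R_X\rangle$; hence so does the tensor product $H^*(X)^{\otimes k}$, as $\langle R_X\rangle$ is closed under tensor products and Tate twists. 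Finally, the Reynolds averaging operator exhibits the $S_k$-invariants as a direct summand of $H^*(X)^{\otimes k}$ in the category $\hs$, so $H^*(\Sym^k X)\in\langle R_X\rangle$ as well.

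There is no real obstacle here: both statements reduce to invoking classical Hodge-theoretic descriptions ($H^*(\Jac\,X)=\Lambda^* H^1(X)$, and $H^*(\Sym^k X)=\Sym^k H^*(X)$ in the $\ZZ/2$-graded sense) together with the stability of $\langle R_X\rangle$ under tensor products, Tate twists, and passing to direct summands.
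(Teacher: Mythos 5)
Your proof is correct, and for the symmetric product it takes a genuinely different route from the paper. For $\Jac\,X$ both arguments are identical: $H^*(\Jac\,X)=\Lambda^* H^1(X)$ is a direct summand of tensor powers of $H^1(X)$ via the antisymmetrising projector. For $\Sym^k X$, the paper invokes Macdonald's explicit presentation of the cohomology ring, namely the ring epimorphism $\bigwedge^* H^1(X)\otimes\QQ[\theta]\twoheadrightarrow H^*(\Sym^k X)$ with $\theta$ the $(1,1)$ hyperplane class; since the source is $R_X$-generated and $\la R_X\ra$ consists of polarisable (hence semisimple) Hodge structures, the quotient is a direct summand of the source and the conclusion follows. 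You instead use the finite quotient $\pi\colon X^k\to \Sym^k X$ together with the transfer isomorphism $H^*(\Sym^k X)\cong (H^1(X)^{\otimes k}\oplus\cdots)^{S_k}$ and the Reynolds projector to realise the invariants as a direct summand of $H^*(X)^{\otimes k}$. Your argument is more elementary and self-contained --- it needs only the standard fact that rational cohomology of a finite quotient is the invariant part, compatibly with Hodge structures, and it is the same device the paper itself uses later for $X^+(\bn)=\tilde X^+(\bn)/F$ --- whereas Macdonald's theorem additionally identifies the ring structure and the generators, information the paper does not actually need for this lemma. Both arguments rely on $\la R_X\ra$ being closed under direct summands (sub-objects), which holds here since everything in sight is polarisable; your handling of the Koszul signs in the $S_k$-action and of $H^2(X)=\QQ(-1)$ as a Tate twist is also correct.
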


\begin{proof}
The first assertion is clear, since $H^*(\Jac\, X)$ is the
exterior algebra on $H^1(X)$. For the case of the symmetric
products of $X$, note that Macdonald \cite{McD} proves that there
is an epimorphism of rings
  \begin{equation}\label{eqn:mor}
  \bigwedge\nolimits^* H^1(X) \otimes \QQ[\theta] \surj H^*(\Sym^k X)\,,
  \end{equation}
where $\theta$ is the hyperplane class, which is defined as the
class of the divisor $D=\Sym^{k-1}X\subset \Sym^kX$. Clearly, this
is a class in $H^{1,1}(\Sym^kX)\cap H^2(\Sym^k X, \ZZ)$, hence
(\ref{eqn:mor}) is a morphism of Hodge structures. This proves
that $H^*(\Sym^kX)$ is $R_X$-generated.
%
%
%for $k\geq 2g-1$, we have that the Abel-Jacobi map $\Sym^k X \to
%\Jac\, X$ is a projective bundle. So the cohomology of $\Sym^k X$,
%$k\geq 2g-1$ is also W-generated. In general, by \cite{McD}, the
%cohomology of $H^1(\Sym^k X)=H^1(X)$ and $H^*(\Sym^k X)$ is
%generated by: $H^1(X)$ and the class $\theta\in H^2(\Sym^k X)$
%which is $PD[D]$,$D=Sym^{k-1}X$. Clearly $\theta\in H^{1,1}()\cap
%H^2(.,\ZZ)$, so $H^*(\Sym^k X)$ is W-generated. Moreover, the only
%Hodge structures appearing in $H^*$ are direct summands of
%$\bigwedge H^1(X)$.
%
\end{proof}

Note that $M(1,d)=\Jac\, X$ and $\cN_{\s}^s(1,1,d_1,d_2)=\Jac\,
X\x \Sym^{d_2-d_1} X$. So Lemma \ref{lem:W-gen-n=1} serves as
starting point for an induction (on the rank) to prove that all
Hodge structures of all moduli spaces are $R_X$-generated.

\begin{proposition}\label{prop:W-gen}
 Let $n\geq 2$. Assume that all the Hodge structures
$H^*_c(M^s(n',d'))$ and $H^*_c(\cN_{\s}^s(n',1,d',d_o))$ are
 $R_X$-generated for $n'<n$. Then for any critical value
 $\s_c>\s_m$ and for any type $\bn$, the Hodge structures
$H^*_c(X^+(\bn))$ are
 $R_X$-generated. The same happens for $\s_c=\s_m$ and for any
 type $\bn\neq \bn_0=(n,d,1)$ ($\bn_0$ corresponds to the stratum
 $\cU_m\subset \cN_{\s_m^+}$, c.f.\ Remark \ref{rem:sm+}).

 The same result holds for the Hodge structures $H^*_c(X^-(\bn))$.
\end{proposition}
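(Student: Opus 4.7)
The plan is to prove $R_X$-generation of $H^*_c(X^\pm(\bn))$ by combining the iterated-extension description of Section \ref{sec:strata} with the A.S.\ fiber structure from Theorem \ref{thm:1} (resp.\ Theorem \ref{thm:1-}), in three stages: the base $\cM(\bn)$ is $R_X$-generated by induction; the total extension space $\tilde{\tilde X}^\pm(\bn)$ is $R_X$-generated via the tower of fibrations; and the descent through the quotients by $G=\prod_j \GL(\ba_j)$ (acting freely by condition~(1) of Proposition \ref{prop:Xbn}) and by the finite group $F$ preserves $R_X$-generation.

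For the base $\cM(\bn)=\tilde U(\bn)\times\cN^s_{\s_c}(n',1,d',d_o)$, the open set $\tilde U(\bn)\subset\prod_i M^s(n_i,d_i)$ is the complement of the diagonals $\{E_{i_1}\cong E_{i_2}\}$. The constraint $\sum a_{ij}n_i+n'=n$ together with $\bn\neq\bn_0$ forces $n_i<n$ for every $i$, so each $M^s(n_i,d_i)$ is $R_X$-generated by induction, as are the diagonals by K\"unneth. Lemma \ref{lem:hX} iterated over this stratification yields $\tilde U(\bn)$ $R_X$-generated. The second factor $\cN^s_{\s_c}(n',1,d',d_o)$ is $R_X$-generated by induction when $n'<n$; in the boundary case $n'=0$ (only at $\s_c=\s_m$) it equals $\Jac^{d_o}X$, handled by Lemma \ref{lem:W-gen-n=1}. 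K\"unneth then gives $\cM(\bn)$ $R_X$-generated.

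Next I would prove $\tilde{\tilde X}^+(\bn)=M_1$ is $R_X$-generated by downward induction on $j$ along the tower $M_1\to\cdots\to M_{r+1}=\cM(\bn)$ from Theorem \ref{thm:1}. The inductive step rests on the following auxiliary claim: if $p:E\to B$ is Zariski-locally trivial with A.S.\ fiber and $H^*_c(B)$ is $R_X$-generated, then so is $H^*_c(E)$. Pulling back the A.S.\ stratification of the fiber to $E$, each stratum is either a (twisted) affine bundle over $B$, whose compactly supported cohomology is a Tate twist of $H^*_c(B)$ and hence $R_X$-generated, or an $(\AA^r-\AA^s)$-bundle sitting in an exact sequence between $\AA^r$- and $\AA^s$-bundles both $R_X$-generated, so Lemma \ref{lem:hX} passes $R_X$-generation; iterating Lemma \ref{lem:hX} over the strata of $E$ finishes the step.

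For the descent, $G=\prod_j\GL(\ba_j)$ is a product of special groups in Serre's sense, so the principal $G$-bundle $\tilde{\tilde X}^+(\bn)\to\tilde X^+(\bn)$ is Zariski-locally trivial. Stratifying the base into locally closed pieces on which the bundle trivializes and using additivity in $K_0(\hs)$ yields $[\tilde{\tilde X}^+(\bn)]=[\tilde X^+(\bn)]\cdot[G]$. Each $\GL(a)$ is A.S.\ via its Bruhat decomposition, so $[G]$ is a nonzero polynomial in $\LL=\QQ(-1)$. Viewing $K_0(\hs)$ as a free $\QQ[\LL,\LL^{-1}]$-module on representatives of Tate orbits of simple Hodge structures, the product identity together with $[G]\neq 0$ in the domain $\QQ[\LL,\LL^{-1}]$ forces the Tate-orbit support of $[\tilde X^+(\bn)]$ to coincide with that of $[\tilde{\tilde X}^+(\bn)]$, so $[\tilde X^+(\bn)]\in\la R_X\ra$. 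The final finite quotient gives $H^*_c(X^+(\bn))=H^*_c(\tilde X^+(\bn))^F$, a sub-Hodge structure, so $R_X$-generation is inherited; $X^-(\bn)$ is handled identically using Theorem \ref{thm:1-}. The most delicate point is this last $K_0(\hs)$-level cancellation; a cleaner alternative would be to rework Theorem \ref{thm:1} directly for the $\GL$-quotient, exhibiting $\tilde X^+(\bn)$ as its own fibration over $\cM(\bn)$ with A.S.\ fibers $F_j/\GL(\ba_j)$ (prototypically the Grassmannian $V(a,W)/\GL(a)=\Gr(a,W)$).
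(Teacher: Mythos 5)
Your first two stages agree with the paper's proof: the base $\cM(\bn)$ is handled by the rank induction (with the same care about $\tilde U(\bn)$ being an open subset of a product of lower-rank moduli spaces, and about the degenerate factor $\Jac^{d_o}X$ when $n'=0$), and the tower $M_1\to\cdots\to M_{r+1}$ with A.S.\ fibers from Theorem \ref{thm:1} gives $R_X$-generation of $H^*_c(\tilde{\tilde X}^+(\bn))$ degree by degree, exactly as in the paper (which invokes the Leray spectral sequence and Corollary \ref{cor:trivial-HS} at this point). The final passage to the finite quotient by $F$ via invariants is also the paper's step.

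The divergence, and the gap, is in the descent through the $\GL$-quotient. Additivity and multiplicativity in $K_0(\hs)$ for locally closed decompositions hold only for the \emph{alternating} sum $e(Z)=\sum_k(-1)^k\Psi(H^k_c(Z))$: the long exact sequence of a pair gives no unsigned additivity. So your identity $[\tilde{\tilde X}^+(\bn)]=[\tilde X^+(\bn)]\cdot[G]$ and the subsequent cancellation by the nonzero polynomial $[G]\in\ZZ[\LL,\LL^{-1}]$ only show that $e(\tilde X^+(\bn))\in\la R_X\ra$. Since $\tilde X^+(\bn)$ is not projective, a simple Hodge structure $S\notin\la R_X\ra$ could occur in $\Gr^W_r H^k_c$ and $\Gr^W_r H^{k+1}_c$ with equal multiplicity and disappear from $e$. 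The proposition asserts $R_X$-generation of each $H^k_c(X^+(\bn))$, and this degree-by-degree statement is what the induction actually consumes: the hypothesis for rank $n$ concerns each $H^k_c(M^s(n',d'))$ and $H^k_c(\cN^s_\s(n',1,d',d_o))$ individually, and these spaces are open, so their classes cannot be recovered from Euler characteristics by purity. An Euler-characteristic-level induction would therefore not close up, and would in any case fail to yield Theorem \ref{thm:main-MHS}. The paper circumvents this by a different device: it introduces the auxiliary spaces $\hat{\hat M}_j$ and $\hat M_j$ (imposing only condition (1) of Proposition \ref{prop:Xbn}, so that the $\GL$-quotients $\hat M_j\to\hat M_{j+1}$ are iterated Grassmannian bundles), factors the pullback of $H^*(P\GL)$ to $\tilde X^+(\bn)$ through $H^*(\hat M_1)$ to see it is a trivial Hodge structure, and then reads off each $H^k(\tilde X^+(\bn))$ from the spectral sequence of $\tilde{\tilde X}^+(\bn)\to\tilde X^+(\bn)\to P\GL$, finishing with Poincar\'e duality. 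Your closing suggestion --- exhibiting $\tilde X^+(\bn)$ itself as a tower over $\cM(\bn)$ with fibers of the form $V(a,W)/\GL(a)=\Gr(a,W)$ --- is essentially this repair, and is the route you should take rather than the $K_0$ cancellation.
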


\begin{proof}
 If all $H^*_c(M^s(n',d'))$ and $H^*_c(\cN_{\s}^s(n',1,d',d_o))$ are
 $R_X$-generated for $n'<n$, then all $H^*_c(\tilde U(\bn))$ and $H^*_c(\cM(\bn))$
 %, defined in
 %(\ref{eqn:3.5})
are also $R_X$-generated. %Also $\cM(\bn)$ is $R_X$-generated.
 By Theorem \ref{thm:1},
 $\tilde{\tilde X}^+(\bn)\to \cM(\bn)$ is a fibration whose
 fiber $F$ is A.S. By Corollary \ref{cor:trivial-HS}, $H^*_c(F)$ is
 a direct sum of trivial Hodge structures. Using the Leray spectral
 sequence, we get that $H^*_c(\tilde{\tilde{X}}^+(\bn))$ is $R_X$-generated.

Now we want to see that $H^*_c({\tilde{X}}^+(\bn))$ is $R_X$-generated. First
note that by Poincar\'e duality, this is equivalent to see that
$H^*({\tilde{X}}^+(\bn))$ is $R_X$-generated.
Now $\tilde{\tilde X}^+(\bn)\to \tilde{X}^+(\bn)$ is a locally
trivial fibration, with fibers $\GL=\prod \GL(\ba_j)$. Therefore
there is a fibration
 $$
 \tilde{\tilde X}^+(\bn)\to \tilde{X}^+(\bn) \to P\GL\, ,
 $$
where $P\GL$ is the classifying space for $\GL$.  By the Leray
spectral sequence, the cohomology of $\tilde{X}^+(\bn)$ is generated by that of
$\tilde{\tilde X}^+(\bn)$ and the pull-back of the cohomology
in $H^*(P\GL)$. Let us see that the pull-back of $H^*(P\GL)$ is a trivial
Hodge structure. Consider the following spaces $\hat{\hat{M}}_j$.
First $\hat{\hat{M}}_{r+1}
=M_{r+1}$, and $\hat{\hat{M}}_{j}\to \hat{\hat{M}}_{j+1}$ are fibrations with fibers
$\hat{\hat{F}}_j=\prod_i V(a_{ij},\Ext^1(\bbT_{j+1}, S_i))$. That is, extensions
as in Proposition \ref{prop:Xbn} but only satisfying property (1).
Clearly, $M_1\subset \hat{\hat{M}}_1$. On the other hand, we still have an
action of $\GL$ on $\hat{\hat{M}}_1$, whose quotient ${\hat{M}}_1$ is constructed as
follows: ${\hat{M}}_{r+1}
=M_{r+1}$, and ${\hat{M}}_{j}\to {\hat{M}}_{j+1}$ are fibrations with fibers
$\hat{F}_j=\prod_i \Gr(a_{ij},\Ext^1(\bbT_{j+1}, S_i))$.

There is a diagram whose rows are fibrations:
 $$
 \begin{array}{cccccc}
  \tilde{\tilde X}^+(\bn) &\to & \tilde{X}^+(\bn) & \to &P\GL \, \,  \\
  \cap & & \cap  & & || \\
  \hat{\hat{M}}_1 & \to & {\hat{M}}_1 & \to & P\GL\, .
 \end{array}
 $$
So the pull-back of the cohomology of $P\GL$ to $\tilde{X}^+(\bn)$ factors
through the cohomology of $\hat{M}_1$, which is an iterated Grassmannian
bundle. By Corollary \ref{cor:trivial-HS}, and the usual A.S. of the
Grassmannian given by Schubert cells, we have that the cohomology of the
fiber is a trivial Hodge structure. As the cohomology of the base of this
fibration is $R_X$-generated, we get that
the cohomology $H^*_c(\tilde{X}^+(\bn))$ is $R_X$-generated.

 Finally, $X^+(\bn)=\tilde{X}^+(\bn)/F$, with $F$ defined in (\ref{eqn:F}). Thus we have that
 $H^*_c(X^+(\bn))=H^*_c(\tilde{X}^+(\bn))^F$ is $R_X$-generated as well.
\end{proof}

\begin{proposition} \label{prop:W-gen-2.2}
 If $H^*_c(X^\pm(\bn))$ are $R_X$-generated, for all critical
 values $\s_c$ and for all types $\bn$ (with the exception of $\s_c=\s_m$ and
 $\bn\neq \bn_o=(n,d,1)$), then
 $H^*_c(M^s(n,d))$ and $H^*_c(\cN_{\s}^s(n,1,d,d_o))$, for any $\s$, are
 $R_X$-generated.
\end{proposition}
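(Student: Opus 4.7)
The plan is to propagate $R_X$-generation across the finitely many critical values of the interval $I$ by iterated application of Lemma \ref{lem:hX}, starting from the boundary $\s=\s_M$ where the whole moduli space is a flip locus, and ending at $\s_m^+$ where the residual open stratum encodes $M^s(n,d)$ up to a projective bundle.

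\emph{Starting point.} By Remark \ref{rem:sM-}, $\cN_{\s_M^-}=\cS_{\s_M^-}$ is the disjoint union of the finitely many strata $X^-(\bn)$. Each of these is $R_X$-generated by hypothesis (the excluded case is at $\s_c=\s_m$, not at $\s_c=\s_M$), so finitely many applications of Lemma \ref{lem:hX} yield that $H^*_c(\cN_{\s_M^-})$ is $R_X$-generated.

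\emph{Propagation.} At each critical value $\s_c\in(\s_m,\s_M)$, the decompositions
$$\cN_{\s_c^+}=\cN_{\s_c}^s\sqcup\cS_{\s_c^+},\qquad \cN_{\s_c^-}=\cN_{\s_c}^s\sqcup\cS_{\s_c^-}$$
together with Lemma \ref{lem:hX} transfer $R_X$-generation from $\cN_{\s_c^+}$ to $\cN_{\s_c}^s$ and then from $\cN_{\s_c}^s$ to $\cN_{\s_c^-}$, using that each $\cS_{\s_c^\pm}=\bigsqcup_\bn X^\pm(\bn)$ is $R_X$-generated by hypothesis. Iterating downward through all critical values establishes $R_X$-generation of $H^*_c(\cN_\s^s(n,1,d,d_o))$ for every $\s\in I$, and in particular of $H^*_c(\cN_{\s_m^+})$. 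This proves the assertion about the moduli of triples.

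\emph{Extraction of $M^s(n,d)$.} At $\s_m^+$, Remark \ref{rem:sm+} gives $\cN_{\s_m^+}=\cU_m\sqcup\cD_m$ with $\cD_m=\bigsqcup_{\bn\neq\bn_o}X^+(\bn)$, which is $R_X$-generated by hypothesis, so Lemma \ref{lem:hX} forces $H^*_c(\cU_m)$ to be $R_X$-generated. Exploiting the freedom to choose the auxiliary parameter $d_o$, we pick it so that $d/n-d_o>2g-2$; then Riemann--Roch gives a constant $h=h^0(E\ox L^*)$ for $(E,L)\in M^s(n,d)\x\Jac^{d_o}X$, and the forgetful map $\cU_m\to M^s(n,d)\x\Jac^{d_o}X$ is a $\PP^{h-1}$-bundle (the projectivisation of the locally free pushforward of a universal family, constructed \'etale-locally). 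The projective bundle formula gives
$$H^*(\cU_m)\cong H^*(M^s(n,d)\x\Jac\, X)\ox H^*(\PP^{h-1}),$$
so $H^*(M^s(n,d)\x\Jac\, X)$ is $R_X$-generated. Künneth together with Lemma \ref{lem:W-gen-n=1} then identifies $H^*(M^s(n,d))$ as a direct summand (via a point of $\Jac\, X$ hitting $H^0$), hence $R_X$-generated.

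\emph{Main obstacle.} The delicate point is the last one: verifying that $\cU_m\to M^s(n,d)\x\Jac^{d_o}X$ behaves as a genuine $\PP^{h-1}$-bundle whose cohomology splits \`a la Leray--Hirsch. When no global universal family exists on $M^s(n,d)$, the bundle is only a Brauer--Severi scheme, but its obstruction class is torsion in the Brauer group and therefore invisible over $\QQ$, so the projective bundle formula still applies on rational cohomology and respects Hodge structures. The combinatorial bookkeeping --- finiteness of the number of types $\bn$ at each critical value, and the check that the propagation really sweeps through every non-critical interval --- is routine but should be tracked carefully.
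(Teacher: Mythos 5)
Your proposal is correct and follows essentially the same route as the paper: start at $\s_M^-$ where the whole moduli space is the flip locus, propagate downward through the critical values via the decompositions $\cN_{\s_c^\pm}=\cN_{\s_c}^s\sqcup\cS_{\s_c^\pm}$ and Lemma \ref{lem:hX}, and finally peel off $\cU_m$ at $\s_m^+$ and use the projective fibration $\cU_m\to M^s(n,d)\x\Jac^{d_o}X$ (for $d/n-d_o>2g-2$, as in \cite[Proposition 4.10]{MOV1}). Your extra care about the Brauer--Severi obstruction being torsion, hence invisible on rational cohomology, is a sound elaboration of a step the paper handles by citation.
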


\begin{proof}
This is a repeated application of Lemma \ref{lem:cS}. For
$\s_c>\s_m$, $\cS_{\s_c^\pm}=\bigsqcup X^\pm(\bn)$, so the (mixed) Hodge
structures of these spaces are $R_X$-generated. The same argument
works for $\cD_m=\bigsqcup_{\bn\neq \bn_0} X^+(\bn)\subset
\cN_{\s_m^+}$.

This implies first that $\cN_{\s_M^-}=\cS_{\s_M^-}$ has cohomology
$H_c^*(\cN_{\s_M^-})$ which is $R_X$-generated.
Then we work inductively on $\s$. If $H_c^*(\cN_{\s})$ is $R_X$-generated
for any $\s>\s_c$, then $H_c^*(\cN_{\s_c}^s)$ is $R_X$-generated, as
$\cN_{\s_c}^s=\cN_{\s_c^+}-\cS_{\s_c^+}$, and then
$H_c^*(\cN_{\s_c^-})$ is
$R_X$-generated as $\cN_{\s_c^-}=\cN_{\s_c}^s\sqcup
\cS_{\s_c^-}$.

Finally $H_c^*(\cN_{\s_m^+})$ is $R_X$-generated. But $H_c^*(\cD_m)$ is also
$R_X$-generated. So $H_c^*(\cU_m)$ is $R_X$-generated, since
$\cU_m=\cN_{\s_m^+}-\cD_m$. Taking $d/n-d_o>2g-2$,
we have a projective fibration $\cU_m\to M^s(n,d)\x \Jac^{d_o}X$, by
\cite[Proposition 4.10]{MOV1} (see Remarks \ref{rem:sm+} and \ref{rem:sc=sm}).
So $H_c^*(M^s(n,d))$ is $R_X$-generated as well.
\end{proof}

Propositions \ref{prop:W-gen} and \ref{prop:W-gen-2.2} together
prove Theorem \ref{thm:main-MHS}, and this implies in particular
Theorem \ref{thm:main-HS} and Corollary \ref{cor:main-HS-bundles}.

\begin{remark} \label{rem:fixed-det}
  The same kind of question can be asked about the moduli spaces
  of triples and of bundles of fixed determinant. Let $M(n,L_0)$
  denote the moduli space of semistable bundles $E$ over $X$ of rank
  $n$ and satisfying $\det(E) \cong L_0$, where $L_0$ is a fixed line bundle of degree $d$.
  Consider also the moduli space $\cN_\s(n,1,L_1,L_2)$ of
  $\s$-polystable triples $(E_1,L_2,\phi)$ with $E_1$ a rank $n$
  bundle with $\det(E_1)=L_1$, where $L_1,L_2$ are fixed line
  bundles of degrees $d_1,d_2$, resp.

  The same techniques developed in Sections \ref{sec:triples}--\ref{sec:HS-moduli} can be
  used to prove that the Hodge structures of $M^s(n,L_0)$ and of
  $\cN_\s^s(n,1,L_1,L_2)$ are $R_X$-generated.
\end{remark}

\end{document}